\newtheorem{theorem}{Theorem}
\newtheorem{lemma}[theorem]{Lemma}
\newtheorem{ex}[theorem]{Example}
\newtheorem{rem}[theorem]{Remark}
\newenvironment{remark}{\begin{rem}\rm}{\end{rem}}
\newcounter{FNC}[page]
\def\fauxfootnote#1{{\addtocounter{FNC}{2}$^\fnsymbol{FNC}$%
     \let\thefootnote\relax\footnotetext{$^\fnsymbol{FNC}$#1}}}
\newcommand{\calH}{\mathcal{H}}
\newcommand{\calM}{\mathcal{M}}
\newcommand{\calV}{\mathcal{V}}
\newcommand{\calW}{\mathcal{W}}
\newcommand{\by}{\mbox{{\bf y}}}
\newcommand{\bz}{\mbox{{\bf z}}}
\renewcommand{\P}{\mathbb{P}}
\newcommand{\R}{\mathbb{R}}
\newcommand{\Z}{\mathbb{Z}}
\newcommand{\C}{\mathbb{C}}
\newcommand{\Jac}{\mbox{\rm Jac}}
\newcommand{\ubc}{\mbox{\rm ubc}}
\title[Fewnomial bounds for completely mixed polynomial systems]{Fewnomial bounds for
  completely mixed polynomial systems}  
\author{Fr\'ed\'eric Bihan}
\address{Laboratoire de Math\'ematiques\\
         Universit\'e de Savoie\\
         73376 Le Bourget-du-Lac Cedex\\
         France}
\email{Frederic.Bihan@univ-savoie.fr}
\urladdr{http://www.lama.univ-savoie.fr/\~{}bihan/}
\author{Frank Sottile}
\address{Department of Mathematics\\
         Texas A\&M University\\
         College Station\\
         Texas \ 77843\\
         USA}
\email{sottile@math.tamu.edu}
\urladdr{http://www.math.tamu.edu/\~{}sottile/}
\thanks{Sottile supported by NSF  grant DMS-0701050 and Texas A\&M ITRAG}  
\keywords{fewnomials, sparse polynomial systems}
\subjclass[2000]{14P99}
\begin{document}
%%%%%%%%%%%%%%%%%%%%%%%%%%%%%%%%%%%%%%%%%%%%%%%%%%%%%%%%%%%%%%%%%%%%%%%%%%%

%%%%%%%%%%%%%%%%%%%%%%%%%%%%%%%%%%%%%%%%%%%%%%%%%%%%%%%%%%%%%%%%%%%%%%%%%%%
\begin{abstract}
 We give a bound for the number of real solutions to systems of $n$ polynomials in $n$
 variables, where the monomials appearing in different polynomials are distinct.
 This bound is smaller than the fewnomial bound if this structure of
 the polynomials is not taken into account.
\end{abstract}
%%%%%%%%%%%%%%%%%%%%%%%%%%%%%%%%%%%%%%%%%%%%%%%%%%%%%%%%%%%%%%%%%%%%%%%%%%%
\maketitle

%%%%%%%%%%%%%%%%%%%%%%%%%%%%%%%%%%%%%%%%%%%%%%%%%%%%%%%%%%%%%%%%%%%%%%%%%%%
%
\section*{Introduction}
%
%%%%%%%%%%%%%%%%%%%%%%%%%%%%%%%%%%%%%%%%%%%%%%%%%%%%%%%%%%%%%%%%%%%%%%%%%%%
In 1980, A.~Khovanskii~\cite{Kh91} showed that a system of $n$ polynomials in $n$ variables
involving $l{+}n{+}1$ distinct monomials has less than 
 \begin{equation}\label{Eq:Kh_bond}
    2^{\binom{l+n}{2}} (n+1)^{l+n}
 \end{equation}
non-degenerate positive solutions.
This fundamental result established the principle  
that the number of real solutions to such a system should have an upper bound that depends
only upon its number of terms.
Such results go back to Descartes~\cite{D1637}, whose rule of signs
implies that a univariate polynomial having $l{+}1$ terms has at most
$l$ positive zeroes.
This principle was formulated by Kushnirenko, who coined the term
``fewnomial'' that has come to describe results of this type.

Khovanskii's bound~\eqref{Eq:Kh_bond} is the specialization to
polynomials of his bound for a more general class of functions.
Recently, the significantly lower bound of
 \begin{equation}\label{Eq:pos_bound}
   \frac{e^2+3}{4} 2^{\binom{l}{2}}n^l
 \end{equation}
was shown~\cite{BS07} for polynomial fewnomial systems.
This took advantage of some geometry specific to polynomial systems, but was otherwise
based on Khovanskii's methods.
The significance of this bound is that it is sharp in the sense that for fixed $l$ there
are systems with $O(n^l)$ positive solutions~\cite{BRS}.
Modifying the proof~\cite{BBS} leads to the bound
 \begin{equation}\label{Eq:Real_bound}
   \frac{e^{\Red{4}}+3}{4} 2^{\binom{l}{2}}n^l
 \end{equation}
for the number of real solutions, when the exponent vectors of the monomials generate
the integer lattice---this condition disallows trivial solutions that differ from
other solutions only by some predictable signs.

These bounds hold in particular if each of the polynomials involve the
same $1{+}l{+}n$ monomials, which is referred to as an unmixed polynomial system.
By Kushnirenko's principle, we should expect a lower bound if not all monomials appear in
every polynomial.

Such an approach to fewnomial bounds, where we take into account differing structures of
the polynomials, was in fact the source of the first result in this subject.
In 1978, Sevostyanov proved there is a function $N(d,m)$ such that if
the polynomial $f(x,y)$ has degree $d$ and the polynomial $g(x,y)$ has $m$ terms, then
the system 
 \begin{equation}\label{eq:bivariate_system}
   f(x,y)\ =\ g(x,y)\ =\ 0
 \end{equation}
has at most $N(d,m)$ non-degenerate positive solutions.
This result has unfortunately never been published\fauxfootnote{A description of this and
  much more is found in Anatoli Kushnirenko's letter to Sottile~\cite{Kush_note}.}.
A special case was recently refined by Avenda\~{n}o~\cite{Av}, who showed that if
$f$ is linear, then~\eqref{eq:bivariate_system} has at most $6m{-}4$ real solutions.

Li, Rojas, and Wang~\cite{LRW03} showed that a fewnomial
system~\eqref{eq:bivariate_system} where $f$ has 3 terms will have at most $2^m-2$
positive solutions (when $m=3$, the bound is lowered to 5).
More generally, they showed that the number of positive solutions to a system 
 \begin{equation}\label{Eq:nxn_system}
    g_1(x_1,\dotsc,x_n)\ =\ 
    g_2(x_1,\dotsc,x_n)\ =\ \dotsb\ =\ 
    g_n(x_1,\dotsc,x_n)\ =\ 0
 \end{equation}
is at most $n+n^2+\dotsb+n^{m-1}$, when each of $g_1,\dotsc,g_{\Red{n-1}}$ is
a trinomial and $g_n$ has $m$ terms.
These bounds are significantly smaller than the corresponding bounds of~\cite{BS07},
which are $\frac{e^2+3}{4}2^{\binom{m-1}{2}}n^{m-1}$ in both cases.
Their methods require that at most one polynomial is not a
trinomial and apparently do not generalize.
However, their results show that the fewnomial bound can be improved when the polynomials
have additional structure.\smallskip

We take the first steps towards improving the fewnomial bounds~\eqref{Eq:pos_bound}
and~\eqref{Eq:Real_bound} when the polynomials have additional structure, but no limit on
their numbers of monomials.
That is, if the polynomial $g_i$ in~\eqref{Eq:nxn_system} has $2+l_i$ terms with $l_i>0$,
we seek bounds on the number of non-degenerate positive solutions 
that are smaller in order than $2^{\binom{l}{2}}n^l$, where
$l{+}n{+}1$ is the total number of terms in all polynomials.
Note that $l\leq l_1+\dotsb+l_n$.
The reason for our choice of parameterization of these systems is that if
some $l_i=0$, there is a change of variables which reduces the number
of variables, eliminates $g_i$ from the list polynomials, and does not change the number
of monomials in the other polynomials, nor the number of positive solutions.

%%%%%%%%%%%%%%%%%%%%%%%%%%%%%%%%%%%%%%%%%%%%%%%%%%%%%%%%%%%%%%%%%%%%%%%%%%%%
\begin{theorem}\label{T:one}
  Suppose that each polynomial $g_i$ in$~\eqref{Eq:nxn_system}$ has a constant term, but
  otherwise all monomials are distinct, so that the system involves $l{+}n{+}1$ monomials where  
  $l=l_1+\dotsb+l_n$.
  Then the number of non-degenerate non-trivial non-zero real solutions$~\eqref{Eq:nxn_system}$
  is at most 
\[
   \tfrac{e^4+3}{4} \cdot 2^{\binom{l}{2}} \tbinom{l}{l_1, \dotsc,l_n}\ ,
\]
  and the number of those which are positive is at most 
\[
   \tfrac{e^2+3}{4} \cdot 2^{\binom{l}{2}} \tbinom{l}{l_1, \dotsc,l_n}\ .
\]
\end{theorem}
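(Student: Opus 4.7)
The plan is to adapt the Gale-dual and Khovanskii-Rolle framework of \cite{BS07, BBS} to exploit the block structure forced by the completely mixed hypothesis. First I would apply a multiplicative change of variables and Gale duality to replace the system~\eqref{Eq:nxn_system} by an equivalent system of $l$ master-function equations on an open subset of $\R^l$, each master function being a product of real powers of $l{+}n$ affine linear forms. Because each polynomial $g_i$ shares only the constant term with the others, the exponent data of the Gale dual system splits into $n$ disjoint blocks of sizes $l_1,\dots,l_n$, one per polynomial. This is the structural feature not present in the unmixed setting, and it is what should ultimately produce the multinomial coefficient.

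Next I would run the Khovanskii-Rolle induction on $l$: at each stage, one of the remaining master-function equations is replaced by an equation obtained by setting an appropriate Jacobian to zero, and Rolle's theorem along a suitable curve bounds the count at the current stage by a constant times the count at the next stage. In the proof of~\eqref{Eq:pos_bound} this step costs a factor of $n$, because the Jacobian in principle introduces all $n$ coordinate directions. In the present setting, if the equation being eliminated comes from block $i$, a careful choice of the eliminating hyperplane inside the same block confines the Jacobian to the $l_i$ affine forms of that block, so the cost of this step is $l_i$ rather than $n$. Multiplying these factors across a sequence of $l$ eliminations and summing over orderings compatible with the partition of $l$ into the blocks of sizes $l_1,\dots,l_n$ produces exactly $\binom{l}{l_1,\dots,l_n}$ in place of $n^l$.

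The base case, when no master-function equations remain, is handled by the univariate Khovanskii estimate (a Descartes-type bound on a product of real powers of affine forms), contributing the factor $2^{\binom{l}{2}}$ together with the constants $\tfrac{e^2+3}{4}$ in the positive case and $\tfrac{e^4+3}{4}$ in the real case, precisely as in~\cite{BS07} and~\cite{BBS}. The passage from the positive bound to the real bound proceeds by summing over sign chambers in the usual way, which is where the lattice/nontriviality condition enters.

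The main obstacle, and the point that requires the most care, is verifying that the block structure really is preserved throughout the Khovanskii-Rolle induction. A priori the Jacobian of one master function with respect to chosen coordinates could mix affine forms from different blocks, and one must check that eliminating a coordinate \emph{within} the block being removed keeps the Jacobian supported inside that block, so that the combinatorial cost is $l_i$ and not $l$. Auxiliary technical points include choosing small generic perturbations that preserve the completely mixed structure, and checking that when some $l_i$ drops to $0$ the induction reduces cleanly to a lower-dimensional completely mixed system, so the multinomial factor closes up correctly.
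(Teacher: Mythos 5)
Your proposal correctly identifies the high-level framework (Gale duality followed by a Khovanskii--Rolle induction) and correctly intuits that the multinomial coefficient must come from the block structure forced by the completely mixed hypothesis. However, two of the core mechanisms you describe are not how the argument actually works, and one of them is a genuine gap.

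First, the multinomial coefficient does \emph{not} arise by ``confining the Jacobian to the $l_i$ affine forms of block $i$'' and ``summing over orderings compatible with the partition.'' Each master function $\phi_k$ involves every block, and each iterated-Jacobian polynomial $F_k$ involves every block as well. What the block structure controls is the \emph{multidegree}: the Gale dual system lives naturally in the complement of a normal-crossings divisor in $\R\P^{l_1}\times\dotsb\times\R\P^{l_n}$, and after clearing denominators the numerator of the Jacobian of $\phi_1,\dotsc,\phi_k,F_{k+1},\dotsc,F_l$ has degree $1+\deg F_{k+1}+\dotsb+\deg F_l$ in \emph{each} block of variables, giving $F_k$ multidegree $2^{l-k}$. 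The multinomial coefficient $\binom{l}{l_1,\dotsc,l_n}$ then enters all at once as the mixed-volume (Kushnirenko/BKK) bound for a multilinear system on the product of projective spaces --- it is the normalized volume of $\Delta^{l_1}\times\dotsb\times\Delta^{l_n}$, not a sum over elimination orderings. Likewise the factor $2^{\binom{l}{2}}$ is the product of the multidegrees $2^{l-1}\cdots 2^0$, not a ``base case'' Descartes-type univariate estimate, and the constants $\tfrac{e^2+3}{4}$, $\tfrac{e^4+3}{4}$ come from summing the bounds on the number of unbounded components $\ubc(C_k)$ over $k$ (a series summing to $\tfrac{e^2-1}{2}$, resp.\ $\tfrac{e^4-1}{2}$), not from the base case.

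Second, and more seriously, your proposal does not address the central technical obstruction: once $l>2$ the iterated Jacobians can fail to meet transversally, and for $l>3$ they can even have positive-dimensional common intersection, so the Khovanskii--Rolle recursion simply cannot be run with the actual Jacobians. Unlike in the unmixed setting of~\cite{BS07,BBS}, there is not enough freedom in the coefficients of the original system to perturb this away, so ``choosing small generic perturbations that preserve the completely mixed structure'' does not repair the problem. The paper's resolution is a modified Khovanskii--Rolle theorem: at each step the true Jacobian $J$ is replaced by a \emph{generic} polynomial $F_k$ of the same multidegree $2^{l-k}$ chosen to agree in sign with $J$ at the finitely many solutions of the current system, which suffices for the Rolle-type argument along arcs of the curve $C_k$ while making the next system non-degenerate. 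This replacement is the key new idea, and without something equivalent the induction in your sketch does not close.
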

%%%%%%%%%%%%%%%%%%%%%%%%%%%%%%%%%%%%%%%%%%%%%%%%%%%%%%%%%%%%%%%%%%%%%%%%%%%%

The bounds of Theorem~\ref{T:one} are strictly smaller than those of~\cite{BBS,BS07}, for
\[
   n^l\ =\ \sum \tbinom{l}{l_1, \dotsc,l_n}\,,
\]
the sum over all $0\leq l_i$ with $l_1+\dotsb+l_n=l$.

In Theorem~\ref{T:one} the bound for positive solutions holds if we allow real-number
exponents, and the first bound is for all non-zero real solutions when the exponents of
the monomials span a subgroup of $\Z^n$ of odd index 
(for otherwise there are trivial solutions).
We only need to prove this for $n\geq 2$, as these bounds exceed Descartes' bound when $n=1$.

We establish Theorem~\ref{T:one} by modifying the arguments of~\cite{BBS,BS07}.
In particular, we apply a version of Gale duality~\cite{BS_Gale} to replace the system of
polynomials by a system of master functions in the complement of a hyperplane arrangement in
$\R^l$, and then estimate the number of solutions by repeated applications of the
Khovanskii-Rolle Theorem applied to successive Jacobians of the system of master
functions. 
This modification is not as straightforward as we have just made it sound.
First, the arguments we modify require that the hyperplane arrangement be in general
position in $\R\P^l$, but in the case here, the hyperplanes are
arrangements of certain normal crossings divisors in the product of
projective spaces $\R\P^{l_1}\times\dotsb\times\R\P^{l_n}$. 
We exploit the special structure of chambers in this complement, together with the
multihomogenity of the Jacobians to obtain the smaller bounds of Theorem~\ref{T:one}.

A more fundamental yet very subtle modification in the arguments is that they require
certain successive Jacobians to meet transversally.
While this can be arranged in~\cite{BBS,BS07} by varying the parameters, 
we do not have such freedom here and the Jacobians
(once $l>2$) can meet non-transversally, and in fact non-properly
when $l>3$.
Thus we cannot simply apply the Khovanskii-Rolle Theorem, but must
provide a modification in
the arguments.

%%%%%%%%%%%%%%%%%%%%%%%%%%%%%%%%%%%%%%%%%%%%%%%%%%%%%%%%%%%%%%%%%%%%%%%%%%%%
%
%
\section{Gale duality for completely mixed polynomial systems}
We do not prove Theorem~\ref{T:one} by arguing directly on the polynomial system, but
rather on a different, equivalent Gale-dual system defined in the complement of a
normal-crossings divisor in the product of projective spaces 
$\R\P^{l_1}\times\dotsb\times\R\P^{l_n}$.

An integer vector $w\in\Z^n$ may be regarded as the exponent of a
Laurent monomial
\[
   \Blue{x^w}\ :=\ x_1^{w_1}x_2^{w_2}\dotsb x_n^{w_n}\,.
\]
Given a collection $\calW\subset\Z^n$ of exponent vectors and 
coefficients $\{a_w\in \C\}$, we obtain the Laurent polynomial
\[
     g(x)\ =\ \sum_{w\in\calW} a_w x^w\ .
\]
This is naturally defined on the complex torus $(\C^\times)^n$ or the real torus $(\R^\times)^n$.
If we restrict the variable $x$ to have positive real components
($x\in\R^n_>$), then we may allow the exponents $w$ to have real-number components.

Fix positive integers $n,l_1,\dotsc,l_n$ with $n>1$ and set
$\Blue{l}:=l_1+\dotsb+l_n$.
We consider systems of Laurent polynomials with real coefficients of the form
 \begin{equation}\label{Eq:CM_system}
   g_1(x_1,\dotsc,x_n)\ =\ 
   g_2(x_1,\dotsc,x_n)\ =\ \dotsb\ =\ 
   g_n(x_1,\dotsc,x_n)\ =\ 0\,,
 \end{equation}
where each polynomial $g_i$ has $l_i+2$ monomials, one of which is a
constant term, and there are no other monomials common to any pair of
polynomials.
The condition that each polynomial has a constant term may be arranged by
multiplying it by a suitable monomial.
This transformation does not change the solutions to the system~\eqref{Eq:CM_system}.

In this case, the system~\eqref{Eq:CM_system} has $l{+}n{+}1$
monomials, so it has at most $\frac{e^2+3}{4}2^{\binom{l}{2}}n^l$ positive solutions.
If the exponents of the monomials span a sublattice of odd index in
$\Z^n$, then the system has at most $\frac{e^4+3}{4}2^{\binom{l}{2}}n^l$ non-zero
real solutions.

Here, we prove Theorem~\ref{T:one}, which improves these bounds for
the system~\eqref{Eq:CM_system} by taking into account the special
structure of the polynomials $g_i$.
This follows the proofs of the bounds in~\cite{BS07,BBS}, but with
several essential and subtle modifications.

%%%%%%%%%%%%%%%%%%%%%%%%%%%%%%%%%%%%%%%%%%%%%%%%%%%%%%%%%%%%%%%%%%%%%%%%%%%%
%
%
\subsection{Reduction to Gale dual system}

For $i=1,\dotsc,n$, let $\{0,w_{i,0},w_{i,1},\dotsc,w_{i,l_i}\}$ be
the exponents of monomials in the polynomial $g_i$, and
rewrite the equation $g_i=0$ as
 \begin{eqnarray*}
   x^{w_{i,0}} &=& a_{i,0} + a_{i,1}x^{w_{i,1}} + \dotsb + a_{i,l_i}x^{w_{i,l_i}}\\ 
             &=& \Blue{p_i}(x^{w_{i,1}},\dotsc,x^{w_{i,l_i}})\,,
 \end{eqnarray*}
where $p_i$ is a degree 1 polynomial in its arguments.

A linear relation among the exponent vectors,
\[
   \sum_{i=1}^{n} \bigl( \alpha_{i,0}w_{i,0} + \alpha_{i,1}w_{i,1} + 
      \dotsb +\alpha_{i,l_i}w_{i,l_i}\bigr) \ =\ 0\,
\]
corresponds to the identity
\[
   \prod_{i=1}^n \Bigl ((x^{w_{i,0}})^{\alpha_{i,0}} \cdot
      \prod_{j=1}^{l_i}  (x^{w_{i,j}})^{\alpha_{i,j}} \Bigr)\ =\ 1\,.
\]
Substituting $x^{w_{i,0}}=p_i(x^{w_{i,1}},\dotsc,x^{w_{i,l_i}})$ into this, we
obtain the consequence of~\eqref{Eq:CM_system},
 \begin{equation}\label{Eq:consequence}
   \prod_{i=1}^n \Bigl (p_i(x^{w_{i,1}},\dotsc,x^{w_{i,l_i}})^{\alpha_{i,0}} \cdot
      \prod_{j=1}^{l_i}  (x^{w_{i,j}})^{\alpha_{i,j}} \Bigr)\ =\ 1\,.
 \end{equation}

Let $\alpha^{(1)},\dotsc,\alpha^{(l)}\in\Z^{n+l}$ be a basis for the subgroup of integer
linear relations among the exponent vectors $w_{i,j}\in\Z^n$, which is saturated.
This gives $l$ independent equations of the form~\eqref{Eq:consequence},
one for each relation $\alpha^{(k)}$.
Under the substitution $y_{i,j}=x^{w_{i,j}}$ for $i=1,\dotsc,n$ and
$j=\Red{1},\dotsc,l_i$,
we obtain the \Blue{{\sl Gale dual system}},
 \begin{equation}\label{Eq:GD}
   \prod_{i=1}^n \Bigl (p_i(y_{i,1},\dotsc,y_{i,l_i})^{\alpha^{(k)}_{i,0}} \cdot
      \prod_{j=1}^{l_i}  (y_{i,j})^{\alpha^{(k)}_{i,j}} \Bigr)\ =\ 1\ \quad
      \mbox{\rm for}\quad k=1,\dotsc,l\,,    
 \end{equation}
which is a consequence of~\eqref{Eq:CM_system} and is valid where $y_{i,j}\neq 0$ and 
$p_i(y_{i,1},\dotsc,y_{i,l_i})\neq 0$.

%%%%%%%%%%%%%%%%%%%%%%%%%%%%%%%%%%%%%%%%%%%%%%%%%%%%%%%%%%%%%%%%%%%%%%%%%%%%
\begin{theorem}[Gale duality for polynomial systems~\cite{BS_Gale}]\label{Th:Gale}
 Suppose that the exponent vectors $w_{i,j}$ span $\Z^n$, and that
 one of the systems~\eqref{Eq:CM_system} or~\eqref{Eq:GD} is a complete intersection. 
 Then the map $x\mapsto y$ defined by
\[
   y_{i,j}\ =\ x^{w_{i,j}}\ \quad
   \mbox{for}\ i=1,\dotsc,n\ \mbox{and}\ j=1,\dotsc,l_i\,,
\]
 gives a scheme-theoretic isomorphism between the solutions to~\eqref{Eq:CM_system} in
 $(\C^\times)^n$ and solutions to~\eqref{Eq:GD} in
\[
   \{y\in(\C^\times)^l\mid p_i(y_{i,1},\dotsc,y_{i,l_i})\neq 0\ \mbox{ for}\ i=1,\dotsc,n\}\,.
\]
 If the exponent vectors span a sublattice of odd index, then this restricts to an
 isomorphism between the corresponding real analytic schemes of solutions.

 If we further relax the conditions on the exponents, allowing them to be real vectors
 which span $\R^n$, then this becomes 
 an isomorphism of real analytic schemes between positive solutions
 of~\eqref{Eq:CM_system} and solutions of~\eqref{Eq:GD} in the positive chamber
\[
   \Blue{\Delta_+}\ :=\ 
    \{ y \in \R^l_>\mid p_i(y_{i,1},\dotsc,y_{i,l_i})>0\ \mbox{ for}\ i=1,\dotsc,n\}\,.
\]
\end{theorem}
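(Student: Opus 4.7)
The plan is to factor the map $x \mapsto y$ through a standard duality for algebraic tori, and then to use the complete intersection hypothesis to upgrade the resulting set-theoretic bijection to a scheme-theoretic isomorphism. The first step is to enlarge the map by also recording the constant-side variables: I define $\widetilde{\varphi}\colon (\C^\times)^n \to (\C^\times)^{l+n}$ by $\widetilde{\varphi}(x)_{i,j} = x^{w_{i,j}}$ for $0 \leq j \leq l_i$. The hypothesis that the $w_{i,j}$ span $\Z^n$ says that the corresponding map of character lattices $\Z^{l+n} \to \Z^n$ is surjective, so Pontryagin duality for diagonalizable groups makes $\widetilde{\varphi}$ a closed embedding of $(\C^\times)^n$ onto the subtorus $T \subset (\C^\times)^{l+n}$ cut out by the monomial equations $\prod_{i,j} y_{i,j}^{\alpha^{(k)}_{i,j}} = 1$ for $k = 1,\dotsc,l$, since the $\alpha^{(k)}$ form a basis of the kernel of the character map.

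Next I describe both solution schemes as slices of this picture. The system~\eqref{Eq:CM_system}, pulled through $\widetilde{\varphi}$, becomes the intersection of $T$ with the closed subscheme $L \subset (\C^\times)^{l+n}$ cut out by the equations $y_{i,0} = p_i(y_{i,1},\dotsc,y_{i,l_i})$ for $i = 1, \dotsc, n$. To compare this with~\eqref{Eq:GD}, I use $L$ to eliminate the $y_{i,0}$'s: substituting $y_{i,0} = p_i(y_{i,1},\dotsc,y_{i,l_i})$ into the monomial relations defining $T$ yields exactly~\eqref{Eq:GD}, and conversely any $y = (y_{i,j})_{j \geq 1}$ solving~\eqref{Eq:GD} with all $p_i(y) \neq 0$ extends uniquely, by setting $y_{i,0} := p_i(y)$, to a point of $T \cap L$. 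Composing with $\widetilde{\varphi}^{-1}$ gives the claimed bijection. The scheme-theoretic upgrade is then formal: the projection $L \to (\C^\times)^l$ that forgets the $y_{i,0}$'s is an isomorphism (its inverse substitutes $p_i$ for $y_{i,0}$), and $\widetilde{\varphi}$ is an isomorphism of $(\C^\times)^n$ onto $T$; the complete intersection hypothesis on either system ensures both intersections have the expected dimension, so these ambient isomorphisms descend to isomorphisms of the solution subschemes.

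For the real versions the same diagram is restricted to real points: when the $w_{i,j}$ span a sublattice of $\Z^n$ of odd index, the $2$-torsion in the quotient $(\R^\times)^{l+n}/T(\R)^\circ$ is killed, so $\widetilde{\varphi}|_{(\R^\times)^n}$ surjects onto all real points of $T$ and the preceding argument transports verbatim. For positive real solutions with real exponents, $\widetilde{\varphi}|_{\R_{>}^n}$ becomes, after applying $\Log$ componentwise, a linear isomorphism onto the affine subspace $\Log(T \cap \R_{>}^{l+n})$, and the conditions $p_i > 0$ defining $\Delta_+$ are exactly the conditions under which the extended point $(y_{i,0},y_{i,j}) = (p_i(y), y_{i,j})$ lies in $\R_{>}^{l+n}$, giving the desired correspondence. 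The main obstacle throughout is keeping the scheme structure aligned at non-reduced points: one must verify that $\widetilde{\varphi}$ is \'etale onto $T$ (which is the content of the lattice hypothesis) and that the substitution $y_{i,0} = p_i$ is a local isomorphism at each candidate solution (which follows from its triangular linear form in the $y_{i,0}$'s), so the expected multiplicity matching follows from the two isomorphisms above rather than requiring a separate infinitesimal analysis.
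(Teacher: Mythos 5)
The paper does not actually prove Theorem~\ref{Th:Gale}---it is quoted from \cite{BS_Gale}, and the Remark that follows merely sketches the strategy: realize both systems as the same intersection in $\C\P^{n+l}$ of an $n$-dimensional toric variety with an $l$-dimensional linear space, restricted to the complement of the coordinate hyperplanes. Your argument is a correct, fleshed-out version of exactly that sketch, carried out affinely in $(\C^\times)^{l+n}$ with the subtorus $T$ playing the role of the toric variety and $L$ the linear slice, so it takes essentially the same approach as the paper indicates.
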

%%%%%%%%%%%%%%%%%%%%%%%%%%%%%%%%%%%%%%%%%%%%%%%%%%%%%%%%%%%%%%%%%%%%%%%%%%%%

%%%%%%%%%%%%%%%%%%%%%%%%%%%%%%%%%%%%%%%%%%%%%%%%%%%%%%%%%%%%%%%%%%%%%%%%%%%%
\begin{remark}
 The proof realizes both systems as the same intersection in $\C\P^{n+l}$ between
 an $n$-dimensional toric variety (corresponding to the exponents of the polynomials $g_i$)
 and an $l$-dimensional linear space (corresponding to the coefficients of the
 $g_i$).
 More specifically, to their points of intersection off the coordinate planes.
 This identification restricts to the points in $\R\P^{n+l}$, and also to points in the
 positive orthant of $\R\P^{n+l}$.
\end{remark}
%%%%%%%%%%%%%%%%%%%%%%%%%%%%%%%%%%%%%%%%%%%%%%%%%%%%%%%%%%%%%%%%%%%%%%%%%%%%

Askold Khovanskii has pointed out that the bounds of~\cite{BBS,BS07} may be established by
working directly on the intersection of the toric variety with the linear space in the
complement of the coordinate planes in $\R\P^{n+l}$, and then using his general method of
bounds for separating solutions of Pfaff equations~\cite[Ch.~3]{Kh91}.
Thus they are a consequence of his general theorem that there exists some
bound. 
Nevertheless, the bounds of~\cite{BBS,BS07} are significant in that they are sharp for $l$
fixed and $n$ large, and that the bound in~\cite{BBS} is for all real solutions, yet is
not much larger than the bound for positive solutions.

Here, we shall also use the formulation as Gale dual systems.
This is because the linear space does not meet the 
coordinate planes in a divisor with normal crossings, due to the special form of the
polynomials $g_i$.
This technical assumption is necessary to obtain good bounds from 
Khovanskii's method in these cases.

Rather than use the pullback of the coordinate hyperplanes in $\R\P^l$, we work instead
with hypersurfaces in the product $\R\P^{l_1}\times\dotsb\times\R\P^{l_n}$
which come from the coordinate hyperplanes and the hyperplane $p_i=0$ in each
factor, and which have normal crossings.
This is further justified, as our arguments for Theorem~\ref{T:one} exploit a block
structure in the variables corresponding to the factors of this product of projective
spaces. 

%%%%%%%%%%%%%%%%%%%%%%%%%%%%%%%%%%%%%%%%%%%%%%%%%%%%%%%%%%%%%%%%%%%%%%%%%%%%
%
%
\section{Proof of Theorem~\ref{T:one}}

Let $n,l_1,\dotsc,l_n$ be positive integers with $n>1$ and set $l:=l_1+\dotsb+l_n$.
For each $i=1,\dotsc,n$ let $\Blue{\bz_i}:=(z_{i,1},\dotsc,z_{i,l_i})$ 
be a collection of $l_i$ real variables and set
 \[
   \Blue{q_i(\bz_i)}\ :=\ 1+z_{i,1}+z_{i,2}+\dotsb+z_{i,l_i}\,.
 \]
Let $\Blue{\calH_i}\subset\R\P^{\ell_i}$ be the arrangement of $l_i+2$ hyperplanes 
consisting of the coordinate hyperplanes and
the hyperplane $q_i(\bz_i)=0$.
Write $\Blue{\calM_i}\subset\R^{l_i}$ for the complement of $\calH_i$.

Then $\bz:=(\bz_1,\dotsc,\bz_n)$ are $l$ real variables.
Let $b_1,\dotsc,b_n\in\R^\times$ be non-zero real numbers and
$\alpha^{(1)},\dotsc,\alpha^{(l)}$ be independent vectors in $\R^{n+l}$.
For each $k=1,\dotsc,l$, set
\[
   \Blue{f_k(\bz)}\ :=\ \prod_{i=1}^n\Bigr( |q_i(\bz_i)|^{\alpha^{(k)}_{i,0}} \cdot
         \prod_{j=1}^{l_i} |z_{i,j}|^{\alpha^{(k)}_{i,j}}\Bigl)
   \qquad\mbox{and}\qquad
   \Blue{d_k}\ :=\ \Bigl(\prod_{i=1}^n |b_i|^{\alpha_{i,0}^{(k)}}\Bigr)^{-1}\,,
\]
and let $\Blue{g_k(\bz)}:=f_k(\bz)-d_k$.
Write $\R\P$ for the product $\R\P^{l_1}\times\dotsb\times\R\P^{l_n}$ and let
$\Blue{\calM}:=\calM_1\times\dotsb\times\calM_n$.
This is the complement of $l+2n$ hypersurfaces in 
$\R\P$ that meet with normal crossings.
Write $\Blue{\calH}$ for this arrangement of hypersurfaces, which is
\[
   \bigcup_{i=1}^n \R\P^{l_1}\times\dotsb\times\R\P^{l_{i-1}}\times\,\calH_i\,\times
           \R\P^{l_{i+1}}\times\dotsb\times\R\P^{l_n}\ .
\]
These hypersurfaces stratify $\R\P$ with 
the $l$-dimensional strata the connected components of $\calM$, which we will call the
\Blue{{\sl chambers}} of $\calH$.
A non-empty intersection of $k$ of the hypersurfaces is smooth of codimension $k$, is
isomorphic to a product of projective spaces and is itself stratified by its intersection with
the other hypersurfaces. 
The chambers of this stratification are the $l{-}k$-dimensional \Blue{{\sl faces}} of $\calH$.

%%%%%%%%%%%%%%%%%%%%%%%%%%%%%%%%%%%%%%%%%%%%%%%%%%%%%%%%%%%%%%%%%%%%%%%%%%%%
\begin{theorem}\label{T:reduction}
  The system
 \begin{equation}\label{Eq:reduction}
   g_1(\bz)\ =\ g_2(\bz)\ =\ \dotsb\ =\ g_l(\bz)\ =\ 0\
 \end{equation}
 has at most
\[
  \tfrac{e^4+3}{4}\cdot 2^{\binom{l}{2}} \tbinom{l}{l_1, \dotsc,l_n}\ ,
\]
 non-degenerate solutions in  $\calM$, and at most
\[
  \tfrac{e^2+3}{4}\cdot 2^{\binom{l}{2}} \tbinom{l}{l_1, \dotsc,l_n}\ .
\]
 non-degenerate solutions in any connected component of $\calM$.
\end{theorem}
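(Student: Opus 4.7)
The plan is to follow the iterated Khovanskii-Rolle strategy of \cite{BS07,BBS}, modified to respect the block decomposition of the variables. The first step is to linearize. On each chamber $\Delta$ of $\calM$ the signs of the $z_{i,j}$ and $q_i(\bz_i)$ are constant, so the equation $g_k=0$ is equivalent to the real linear master equation
\[
\psi_k(\bz)\ :=\ \sum_{i=1}^n\alpha^{(k)}_{i,0}\log|q_i(\bz_i)|\,+\,\sum_{i,j}\alpha^{(k)}_{i,j}\log|z_{i,j}|\,-\,c_k\ =\ 0,
\]
with $c_k\in\R$ constant on $\Delta$. Thus non-degenerate solutions of~\eqref{Eq:reduction}, either chamberwise or on all of $\calM$, correspond to non-degenerate solutions of the master system $\psi_1=\cdots=\psi_l=0$.

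Next, set $X_k:=\{\psi_1=\cdots=\psi_{l-k}=0\}\cap\Delta$, so $X_{l-1}$ is a real analytic curve. Khovanskii-Rolle gives $\#X_0\leq u(X_{l-1})+\#(X_{l-1}\cap\{J_1=0\})$, where $u$ counts unbounded ends in $\Delta$ and $J_1=\det(\partial\psi_k/\partial z_{i,j})$. A Cauchy--Binet expansion writes
\[
J_1\ =\ \sum_{I}c_I\,\frac{\det(\nabla h_I)}{\prod_{h\in I}h},
\]
where $I$ ranges over size-$l$ subsets of the $l+n$ defining forms $\{z_{i,j},q_i\}$ of $\calH$, so multiplying by $\prod_{i,j}z_{i,j}\prod_iq_i$ yields a polynomial $P_1=\sum_I c_I\det(\nabla h_I)\prod_{h\notin I}h$. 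Each summand is a product of the $n$ forms outside $I$, and since each such form depends on a single block of variables, the multi-degree of each summand in block $\bz_i$ is simply the number of omitted forms from that block. Iterating Khovanskii-Rolle $l$ times produces an all-polynomial system, whose zeros are bounded by the multihomogeneous B\'ezout-type number on $\R\P^{l_1}\times\cdots\times\R\P^{l_n}$; careful tracking of the multi-degrees through the iteration shows that this number is at most $2^{\binom{l}{2}}\binom{l}{l_1,\ldots,l_n}$, replacing the factor $n^l$ from \cite{BS07}. The unbounded-end counts $u(X_k)$ are controlled recursively by restricting to codimension-one strata of $\calH$, each of which is a product of projective spaces with one $l_i$ decremented; the multinomial coefficient is compatible with this descent through the identity
\[
\tbinom{l}{l_1,\ldots,l_n}\ =\ \sum_{i=1}^n\tbinom{l-1}{l_1,\ldots,l_i-1,\ldots,l_n}.
\]
Summing the contributions and applying the telescoping-sum estimate produces the prefactor $(e^2+3)/4$ in a single chamber, as in \cite{BS07}, and the branching-sign variant of \cite{BBS} upgrades this to $(e^4+3)/4$ on all of $\calM$.

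The principal obstacle, already flagged in the introduction, is that once $l>2$ the successive Jacobians $P_1,\ldots,P_l$ need not meet transversally, and once $l>3$ they need not even meet properly, so the Khovanskii-Rolle theorem does not apply verbatim at every stage. To circumvent this I would perturb the integer relations $\alpha^{(k)}$ defining the $\psi_k$ within a small real family that preserves the block structure (and, for the first bound, the sublattice-of-odd-index condition), verify transversality for a generic choice, establish the bound in that case, and then recover the original bound by upper-semicontinuity of the counting function for non-degenerate isolated zeros. A secondary technical obligation is the multi-degree bookkeeping: one must verify that the multi-degrees of the iterated Jacobians combine so that the multihomogeneous B\'ezout contribution is \emph{exactly} $2^{\binom{l}{2}}\binom{l}{l_1,\ldots,l_n}$ rather than a larger quantity, which is precisely where the improvement over $2^{\binom{l}{2}}n^l$ is realized.
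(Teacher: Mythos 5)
Your high‑level architecture is the right one and closely matches the paper: linearize on each chamber via logarithms, run an iterated Khovanskii--Rolle argument, bound the final all‑polynomial system by a multihomogeneous B\'ezout count on $\R\P^{l_1}\times\dotsb\times\R\P^{l_n}$, and count unbounded ends by descending to lower‑dimensional strata of $\calH$, which are themselves products of projective spaces. The multidegree bookkeeping you gesture at is also what the paper does, via an explicit computation (Theorem~\ref{T:det_deg} and Lemma~\ref{L:Minor}) showing that $\delta\cdot\det\Jac(\phi_1,\dotsc,\phi_k,F_{k+1},\dotsc,F_l)$ has multidegree $1+d_{k+1}+\dotsb+d_l$, giving $2^{l-k}$ at each stage and hence the factor $2^{\binom{l}{2}}$ and the multinomial $\binom{l}{l_1,\dotsc,l_n}$.

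The genuine gap is in how you propose to handle the non‑transversality of the successive Jacobians, and this is exactly the ``subtle modification'' the introduction warns about. You propose to perturb the relations $\alpha^{(k)}$ and invoke upper‑semicontinuity of the count of non‑degenerate isolated zeros. But the paper already reduces to general $\alpha$ (see the remark following Theorem~\ref{T:reduction}), and the remark after the proof of Theorem~\ref{T:Modified_Kh_Ro} records that even for general $\alpha$ the failure is \emph{structural}: with $l_1=l_2=l_3=1$ the iterated Jacobians $J_3,J_2,J_1$ defined naively do not meet transversally, and with $n=4$ and all $l_i=1$ they even share a common curve. Thus there is no nearby $\alpha'$ in your ``small real family preserving the block structure'' at which the iterated‑Jacobian chain becomes a complete intersection; the perturbation cannot repair the induction, so there is no transversal nearby system to which upper‑semicontinuity could be applied. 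The paper's fix is different and cannot be reached by parameter perturbation: at each stage, rather than pass the actual Jacobian numerator $J$ to the next step, one replaces it by a \emph{generic} polynomial $F_k$ of the same multidegree $2^{l-k}$ chosen to have the same signs as $J$ at the (finitely many) points of the previous system. The same‑sign property preserves the Rolle‑type intermediate‑value argument on the curve $C_k$, while genericity of $F_k$ restores transversality for all subsequent steps. This sign‑preserving polynomial replacement (Theorem~\ref{T:Modified_Kh_Ro}) is the essential ingredient missing from your proposal; without it the inductive inequality~\eqref{Eq:MKh-Ro} cannot be established for $l\geq 3$.
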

%%%%%%%%%%%%%%%%%%%%%%%%%%%%%%%%%%%%%%%%%%%%%%%%%%%%%%%%%%%%%%%%%%%%%%%%%%%%

%%%%%%%%%%%%%%%%%%%%%%%%%%%%%%%%%%%%%%%%%%%%%%%%%%%%%%%%%%%%%%%%%%%%%%%%
\begin{proof}[Proof of Theorem~$\ref{T:one}$]
By Theorem~\ref{Th:Gale}, it suffices to consider 
an equivalent Gale dual system~\eqref{Eq:GD}.
Write $p_i(\by_i)$ for $p_i(y_{i,1},\dotsc,y_{i,l_i})$.
We bound the solutions to 
 \begin{equation}\label{Eq:GD_2}
   1\ =\ \prod_{i=1}^n\Bigr( p_i(\by_i)^{\alpha^{(k)}_{i,0}} \cdot
         \prod_{j=1}^{l_i} y_{i,j}^{\alpha^{(k)}_{i,j}}\Bigl)\qquad  \mbox{for}\ k=1,\dotsc,l\,,
 \end{equation}
that (i) are real and also those (ii) that lie in the positive chamber
\[ 
    \Blue{\Delta_+}\ :=\ \{\by\mid y_{i,j}>0\quad\mbox{and}\quad p_i(\by_i)>0,
     \quad\mbox{for all }i,j\}\,.
\]

The system~\eqref{Eq:GD_2} is a subsystem of the system
 \begin{equation}\label{Eq:GD_3}
   1\ =\ \prod_{i=1}^n\Bigr( |p_i(\by_i)|^{\alpha^{(k)}_{i,0}} \cdot
         \prod_{j=1}^{l_i} |y_{i,j}|^{\alpha^{(k)}_{i,j}}\Bigl)\qquad \mbox{for}\ k=1,\dotsc,l\,.
 \end{equation}
This has the same solutions as~\eqref{Eq:GD_2} in the positive chamber $\Delta_+$.
It is the disjunction of systems Gale dual to the systems 
 \[
    x^{w_{i,0}}\ =\ \pm a_{i,0}\,\pm\, a_{i,1}x^{w_{i,1}}\,\pm\ \dotsb\
    \pm\,a_{i,l_i}x^{w_{i,l_i}}
    \qquad\mbox{for}\ i=1,\dotsc,n\,,
 \]
as $\pm$ ranges over all sign choices, and so its solutions include the real
solutions to~\eqref{Eq:GD_2}. 
Since there are finitely many such systems, we may assume that they are simultaneously
non-degenerate. 

Replacing each variable $y_{i,j}$ by $a_{i,0}z_{i,j}/a_{i,j}$, where $z_{i,j}$ are new
real variables, we have 
 \[
     p_i(\by_i)\ =\ a_{i,0} ( 1+ z_{i,1}+\dotsb+z_{i,l_i})\ = 
      a_{i,0} q_i(\bz_i)\,,
 \]
where $\bz_i:=(z_{i,1},\dotsc,z_{i,l_i})$.
Under this transformation, the system~\eqref{Eq:GD_3} becomes
\[
   d_k^{-1}f_k(\bz)\ =\ 1\quad(\mbox{or}\quad g_k(\bz)\ =\ 0)
   \qquad\mbox{for}\qquad k=1,\dotsc,l\,,
\]
which is just the system~\eqref{Eq:reduction}, where $b_i=a_{i,0}$.
We complete the proof of Theorem~\ref{T:one} by noting that the transformation
$\by\mapsto\bz$ transforms the domain of the Gale system into $\calM$, mapping the positive
chamber $\Delta_+$ to some chamber of $\calM$.
\end{proof}
%%%%%%%%%%%%%%%%%%%%%%%%%%%%%%%%%%%%%%%%%%%%%%%%%%%%%%%%%%%%%%%%%%%%%%%%

%%%%%%%%%%%%%%%%%%%%%%%%%%%%%%%%%%%%%%%%%%%%%%%%%%%%%%%%%%%%%%%%%%%%%%%%
\begin{remark}
 It suffices to prove Theorem~\ref{T:reduction} when the constants 
 $b=(b_1,\dotsc,b_n)$ and the exponents 
 $\alpha=(\alpha^{(1)},\dotsc,\alpha^{(l)})$ are general.
 In particular, we will assume that every submatrix of the matrix whose rows are the
 exponent vectors has full rank, and further that the constants $b$ and the exponents 
 $\alpha^{(k)}_{i,0}$ are general.
 This is sufficient because a perturbation of the system~\eqref{Eq:reduction} will not reduce
 its number of non-degenerate solutions in $\calM$.
\end{remark}
%%%%%%%%%%%%%%%%%%%%%%%%%%%%%%%%%%%%%%%%%%%%%%%%%%%%%%%%%%%%%%%%%%%%%%%%

We reduce the proof of Theorem~\ref{T:reduction} to a series of lemmas, which are proven in
subsequent sections.
For each $k=1,\dotsc,l$ set $\Blue{\phi_k(\bz)}:=\log f_k(\bz)$, which is
\[
   \phi_k(\bz)\ =\ \sum_{i=1}^n \Bigl( \alpha_{i,0}^{(k)} \log |q_i(\bz_i)|
    \ +\ \sum_{j=1}^{l_i} \alpha_{i,j}^{(k)}\log|z_{i,j}|\Bigr)\ .
\]
Then the system~\eqref{Eq:reduction} becomes
$\phi_k(\bz)=\log(d_k)$ for $k=1,\dotsc,l$.
We also consider subsets $\mu_k$ of $\calM$ defined by
 \begin{eqnarray*}
   \Blue{\mu_k}& :=&
   \{\bz\in\calM\mid \phi_m(\bz)=\log(d_m)\ \mbox{for}\ m=1,\dotsc, k{-}1\}\\
   &=&
    \{\bz\in\calM\mid f_m(\bz)=d_m\ \mbox{for}\ m=1,\dotsc, k{-}1\}\ .
 \end{eqnarray*}
%

%%%%%%%%%%%%%%%%%%%%%%%%%%%%%%%%%%%%%%%%%%%%%%%%%%%%%%%%%%%%%%%%%%%%%%%%
\begin{lemma}\label{L:boundary}
 The subset $\mu_k$ of $\calM$ is smooth and has dimension $l{-}k{+}1$.
 The points in $\calH$ lying in the closure of $\mu_k$ are a union of
 $l{-}k$ dimensional faces. 
 In the neighborhood of any point in the relative interior of such a face, $\mu_k$ may have at
 most one branch in each chamber of $\calM$ adjacent to that face.
\end{lemma}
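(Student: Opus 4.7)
The plan is to establish the three assertions in sequence, leaning on the genericity assumption on $b$ and the exponents $\alpha^{(1)},\dots,\alpha^{(l)}$ flagged in the preceding remark. For smoothness and the dimension count, I would observe that on $\calM$ every $\phi_m=\log f_m$ is smooth, with partial derivatives $\partial\phi_m/\partial z_{i,j}=\alpha^{(m)}_{i,j}/z_{i,j}+\alpha^{(m)}_{i,0}/q_i(\bz_i)$, and so the Jacobian of $\Phi:=(\phi_1,\dots,\phi_{k-1})$ is a $(k-1)\times l$ matrix each row of which is a generic combination (determined by $\alpha^{(m)}$) of the vectors $e_{i,j}/z_{i,j}$ and $\sum_j e_{i,j}/q_i(\bz_i)$. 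The assumption that every submatrix of the $\alpha$-matrix has full rank forces this Jacobian to have rank $k-1$ on a dense open set of $\calM$, so Sard's theorem applies: for generic $b$, the vector $(\log d_1,\dots,\log d_{k-1})$ is a regular value of $\Phi$, and $\mu_k=\Phi^{-1}(\log d_1,\dots,\log d_{k-1})$ is smooth of dimension $l-k+1$.

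For the second assertion, I would pick a point $p\in\overline{\mu_k}\cap\calH$, let $F$ be the minimal face of $\calH$ containing $p$, of codimension $c$ in $\R\P$, and choose local analytic coordinates $(u_1,\dots,u_c,v_1,\dots,v_{l-c})$ near $p$ in which $F=\{u=0\}$ and the $c$ hypersurfaces of $\calH$ through $p$ are the coordinate hyperplanes $u_s=0$. In these coordinates each $\phi_m$ takes the form
\[
  \phi_m\ =\ \sum_{s=1}^c A^{(m)}_s\log|u_s|\ +\ \tilde h_m(u,v),
\]
where $A^{(m)}_s$ is the entry of $\alpha^{(m)}$ associated to the $s$-th hypersurface and $\tilde h_m$ is smooth near $p$. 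A sequence in $\mu_k$ converging to $p$ produces $T^{(n)}_s:=\log|u^{(n)}_s|\to-\infty$ while the vector $(\sum_s A^{(m)}_s T^{(n)}_s)_{m=1}^{k-1}$ must remain bounded (since $\phi_m=\log d_m$ and $\tilde h_m$ is bounded). By genericity, the $(k-1)\times c$ matrix $(A^{(m)}_s)$ has rank $\min(k-1,c)$, so if $c<k$ its kernel is trivial and $T^{(n)}$ must stay bounded, a contradiction. Hence $c\geq k$; combined with the general dimension bound $\dim\overline{\mu_k}\cap\calH=l-k$, the top-dimensional strata of $\overline{\mu_k}\cap\calH$ are precisely $(l-k)$-dimensional faces, and lower-dimensional faces in the intersection lie in the closures of these.

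For the branch-counting assertion, fix $p$ in the relative interior of an $(l-k)$-dimensional face $F$ of $\overline{\mu_k}\cap\calH$ (so $c=k$) and a chamber $\sigma$ of $\calM$ adjacent to $F$, specified by a sign choice for $u_1,\dots,u_k$. I would introduce logarithmic coordinates $t_s:=\log|u_s|$ on $\sigma$ and re-express the equations defining $\mu_k$ as
\[
  \sum_{s=1}^k A^{(m)}_s\, t_s\ +\ \tilde h_m(u,v)\ =\ \log d_m,\qquad m=1,\dots,k-1.
\]
The functions $\tilde h_m$ depend on $u=(\pm e^{t_1},\dots,\pm e^{t_k})$ and $v$, extending smoothly to $t_s\to-\infty$. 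The Jacobian with respect to $(t_1,\dots,t_{k-1})$ is the submatrix $(A^{(m)}_s)_{1\leq m,s\leq k-1}$ plus terms of order $e^{t_s}$, which is invertible by genericity in a neighborhood of $t=-\infty$. The implicit function theorem then produces a unique smooth solution $t_s=t_s(t_k,v)$ for $s<k$, so $\mu_k\cap\sigma$ has at most one branch approaching $F$ near $p$.

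The main obstacle is the boundary analysis of the second and third parts: because the arrangement $\calH$ has the product structure $\calH_1\times\cdots\times\calH_n$ rather than being in general position in $\R\P^l$, one cannot directly cite the hyperplane-arrangement arguments of~\cite{BBS,BS07}; the payoff of the careful local analysis above is that it isolates the algebraic input (full rank of every submatrix of $\alpha$) from the geometric input (normal-crossings structure of $\calH$), both of which survive the product setting.
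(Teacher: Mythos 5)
Your treatment of the first assertion is the same as the paper's: apply Sard's theorem to the map whose level set is $\mu_k$ (you use the logarithm $\Phi=(\phi_1,\dots,\phi_{k-1})$ where the paper uses $F=(f_1,\dots,f_{k-1})$, but these are related by the diffeomorphism $\log$, so the arguments are identical). For the second and third assertions the paper says almost nothing --- it simply cites the proof of Lemma~3.8 of~\cite{BS07} and flags that the genericity hypothesis on the exponent matrix is what makes it go through --- whereas you supply a self-contained local-coordinate analysis. That analysis has the right shape and is essentially what the cited argument does, so the two proofs are not in conflict; yours just inlines the reference.

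Two points in your boundary analysis deserve more care, though neither is fatal. First, you write that $A^{(m)}_s$ ``is the entry of $\alpha^{(m)}$ associated to the $s$-th hypersurface.'' That is true when the hypersurface is $z_{i,j}=0$ (giving $\alpha^{(m)}_{i,j}$) or $q_i(\bz_i)=0$ (giving $\alpha^{(m)}_{i,0}$), but not when the hypersurface is the hyperplane at infinity of a factor $\R\P^{l_i}$: homogenizing shows the corresponding coefficient is $-\alpha^{(m)}_{i,0}-\sum_{j=1}^{l_i}\alpha^{(m)}_{i,j}$. So the matrix $(A^{(m)}_s)$ is \emph{not} a submatrix of the $\alpha$-matrix in general; the genericity hypothesis must be interpreted as full rank of every relevant submatrix of the \emph{augmented} matrix obtained by adjoining, for each $i$, the column $-\alpha^{(m)}_{i,0}-\sum_j\alpha^{(m)}_{i,j}$. (This is safe, because the normal-crossings property means a face never selects all $l_i+2$ of the hyperplanes in $\calH_i$, so one never hits the one built-in linear dependence among those $l_i+2$ columns.) Second, for the branch-counting step you invoke the implicit function theorem ``in a neighborhood of $t=-\infty$'' to get a unique smooth solution. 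As stated this only gives local uniqueness of a parametrization, not that at most one sheet of $\mu_k\cap\sigma$ limits onto $F$; one should add that the Jacobian $(A^{(m)}_s)_{1\le m,s\le k-1}$ is invertible and the perturbation terms $O(e^{t_s})$ are uniformly small on $\{t_s<-T\}$, so the map $(t_1,\dots,t_{k-1})\mapsto(G_1,\dots,G_{k-1})$ is a global diffeomorphism onto its image on that region (e.g.\ by a contraction/Hadamard-type argument), which forces a single branch. With these two repairs the argument is correct and matches the approach the paper points to in~\cite{BS07}.
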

%%%%%%%%%%%%%%%%%%%%%%%%%%%%%%%%%%%%%%%%%%%%%%%%%%%%%%%%%%%%%%%%%%%%%%%%

We will prove this lemma in \S~\ref{S:smooth}, where we also explain our genericity
hypotheses.

A polynomial $F(\bz)$ has \Blue{{\sl multidegree} $d$} if, for each
$i=1,\dotsc,n$ it has degree $d$ in the block of variables $\bz_i$.
This is typically written multidegree $(d,\dotsc,d)$, but we adopt this simplified notation as
our polynomials will have the same degree in each block of variables.

A key step in our estimate is the following modification of the Khovanskii-Rolle
Theorem~\cite[pp.~42--51]{Kh91}. 
Write $\#\calV(\psi_1,\dotsc,\psi_l)$ for the number of solutions to the system 
$\psi_1=\dotsb=\psi_l=0$.
Recall that we write $g_k(\bz)$ for $f_k(\bz)-d_k$.

%%%%%%%%%%%%%%%%%%%%%%%%%%%%%%%%%%%%%%%%%%%%%%%%%%%%%%%%%%%%%%%%%%%%%%%%%%%%
\begin{theorem}\label{T:Modified_Kh_Ro}
 There exist polynomials $F_1, F_2,\dotsc,F_l$ where $F_{l-k}$ is a polynomial of
 multidegree $2^k$ with the property that
\begin{enumerate}

 \item The system
\[
    g_1\ =\ \dotsb\ =\ g_k\ \;=\;\ F_{k+1}\ =\ \dotsb\ =\ F_l\ =\ 0
\]
   has only non-degenerate solutions in $\calM_\C$, and the system
\[
    g_1\ =\ \dotsb\ =\ g_{k-1}\ \;=\;\ F_{k+1}\ =\ \dotsb\ =\ F_l\ =\ 0
\]
     ($g_k$ is omitted) defines a smooth  curve $C_k\subset\calM$.

 \item We have the estimate
 \begin{equation}\label{Eq:MKh-Ro}
    \#\calV(g_1,\dotsc,g_k,\,F_{k+1},\dotsc,F_l)\ \leq\ 
     \ubc(C_k)\ +\ 
    \#\calV(g_1,\dotsc,g_{k-1},\,F_k,F_{k+1},\dotsc,F_l)\,,
 \end{equation}
 where $\ubc(C_k)$ is the number of unbounded components of the curve $C_k$. 
\end{enumerate}
\end{theorem}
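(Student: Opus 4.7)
The plan is to construct $F_l,F_{l-1},\dotsc,F_1$ inductively as suitably normalized Jacobians of the evolving system, to exploit the block-column structure of the Gale dual in order to extract a common factor $\prod_i q_i^{l_i-1}$ from each Jacobian, and then to apply a Khovanskii--Rolle argument on the resulting smooth curves.

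I would start with $F_l$. Form the Jacobian of $(\phi_1,\dotsc,\phi_l)$ and clear denominators by multiplying column $(i,m)$ by $z_{i,m}q_i$, so that each entry becomes the linear form $\alpha^{(k)}_{i,m}q_i+\alpha^{(k)}_{i,0}z_{i,m}$. Expanding the resulting determinant $\tilde J_l$ multilinearly in columns, each block-$i$ column splits into a $q_i$-piece and a $z_{i,m}$-piece, but the $z$-pieces within block $i$ are all scalar multiples of the single vector $(\alpha^{(k)}_{i,0})_k$; any expansion that takes the $z$-piece from two columns of the same block produces a repeated column and vanishes. The surviving terms carry $q_i^{l_i-1}$ as a common factor in each block, so $F_l:=\tilde J_l/\prod_i q_i^{l_i-1}$ is a polynomial of uniform multidegree~$1$. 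For $k<l$, I iterate: let $\tilde J_k$ be the Jacobian of $(\phi_1,\dotsc,\phi_k,F_{k+1},\dotsc,F_l)$ after the same column clearing, and set $F_k:=\tilde J_k/\prod_i q_i^{l_i-1}$. A count over the permutation expansion shows each term of $\tilde J_k$ has degree exactly $l_i+\sum_{j>k}d_j=l_i+2^{l-k}-1$ in block $\bz_i$ (using inductively $d_j=2^{l-j}$). The divisibility by $\prod_i q_i^{l_i-1}$ follows because on $\{q_i=0\}$ the $\phi$-row entries in block-$i$'s columns collapse to proportional vectors while the $F$-row entries vanish identically; the column operation replacing $C_{(i,m)}$ by $z_{i,1}C_{(i,m)}-z_{i,m}C_{(i,1)}$ for $m=2,\dotsc,l_i$ extracts an explicit factor $q_i$ from each modified column, and after accounting for the $z_{i,1}$-scaling and coprimality of $z_{i,1}$ and $q_i$ this yields $q_i^{l_i-1}\mid\tilde J_k$. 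The quotient $F_k$ then has uniform multidegree $2^{l-k}$, as required.

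Part (1), non-degeneracy of the successive systems and smoothness of each curve $C_k$, will follow from the genericity of the parameters $b_i$ and $\alpha^{(k)}$ already invoked in the remark after Theorem~\ref{T:reduction}: failure of a given subsystem to be a complete intersection in $\calM_\C$, or of $C_k$ to be smooth, is a proper algebraic constraint on the parameters that can be arranged away by perturbation without reducing the real solution count. For part (2), on the smooth curve $C_k$ any two consecutive zeros of $g_k$ along a connected component must be separated by a critical point of $g_k|_{C_k}$, and such critical points are exactly the zeros of $F_k|_{C_k}$, since by construction $F_k$ is, up to the factor $\prod_i q_i^{l_i-1}$ that is nonvanishing on $\calM$, the Jacobian that witnesses linear dependence of $d\phi_k$ with the other differentials. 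The standard arc-counting along $C_k$ then yields~\eqref{Eq:MKh-Ro}, with the unbounded arcs producing the $\ubc(C_k)$ term.

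The main obstacle, signaled in the introduction, is that for $l>3$ the successive Jacobian hypersurfaces $\{F_k=0\}$ can meet the earlier equations non-transversally and even improperly, so the usual Khovanskii--Rolle theorem does not apply verbatim. I would handle this by perturbing the right-hand sides $d_k$ so that every successive intersection is transverse, applying the classical count on the perturbed system, and then passing to the limit; the delicate point is the semicontinuity of both the non-degenerate solution count and of $\ubc(C_k)$, and in particular controlling how unbounded components of $C_k$ can emerge or disappear as the parameters degenerate. Making this limiting argument rigorous, so that \eqref{Eq:MKh-Ro} survives without loss, is the technical heart of the theorem and the reason a modification beyond~\cite{BBS,BS07} is necessary.
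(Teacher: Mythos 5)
Your degree and divisibility computations in the second paragraph are essentially correct and amount to a reproof of the paper's Theorem~\ref{T:det_deg} and Lemma~\ref{L:Minor} by a slightly different column-clearing normalization; that part is fine. Likewise the reduction of part~(2) to a sign-change argument along arcs of $C_k$ is the right Khovanskii--Rolle mechanism.

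The genuine gap is in your last paragraph, and it is not a technical loose end but a wrong turn. You take $F_k$ to be the (normalized) Jacobian itself and propose to repair the transversality failure by perturbing the constants $d_k$ and passing to a limit. That cannot work: the Jacobians $\det\Jac(\phi_1,\dotsc,\phi_k,F_{k+1},\dotsc,F_l)$, and hence all the $F_m$ in your construction, depend only on the exponents $\alpha^{(m)}$ and on the previously built $F$'s --- not on the $d_k$ at all, since $d\phi_m$ and $dF_m$ contain no $d$'s. So perturbing $d$ leaves the loci $\{F_{k+1}=\dotsb=F_l=0\}$ and the final system $\{F_1=\dotsb=F_l=0\}$ untouched. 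The paper's remark after the proof records that already for $n=4$, all $l_i=1$, and generic $\alpha$, these Jacobian loci have excess intersection (a common curve); in that situation $C_1$ is not a curve and $\#\calV(F_1,\dotsc,F_l)$ is infinite, for every choice of $d$, so neither your part~(1) nor the limiting argument for~\eqref{Eq:MKh-Ro} can be rescued. Nor does the genericity of $b,\alpha$ you invoke for part~(1) help, since the degeneracy is generic in those parameters too. The actual modification in the paper is different in kind: at each step one computes the Jacobian numerator $J$ (your $F_k$), but then \emph{discards} it in favor of a polynomial $F_k$ of the same multidegree $2^{l-k}$, chosen generically subject only to having the same signs as $J$ at the finitely many points of $\calV(g_1,\dotsc,g_k,F_{k+1},\dotsc,F_l)$. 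Genericity of this new $F_k$ gives transversality and smoothness of $C_{k-1}$ for free, establishing~(1), and the sign-agreement with $J$ at those points is exactly what the Rolle argument needs to establish~(2): between consecutive zeros of $g_k$ on an arc of $C_k$, $J$ changes sign, hence so does $F_k$, hence $F_k$ vanishes on that arc. That decoupling of ``sign data'' from ``the literal Jacobian polynomial'' is the idea your proposal is missing.
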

%%%%%%%%%%%%%%%%%%%%%%%%%%%%%%%%%%%%%%%%%%%%%%%%%%%%%%%%%%%%%%%%%%%%%%%%%%%%

The estimate~\eqref{Eq:MKh-Ro} leads to the
estimate for the number of solutions to~\eqref{Eq:reduction}:
 \begin{equation}\label{Eq:estimation}
  \#\calV(g_1,\dotsc,g_l)\ \leq\ \ubc(C_1)+\dotsb+\ubc(C_l)+\#\calV(F_1,\dotsc,F_l)\ .
 \end{equation}
This holds both in the full complement $\calM$, as well as in each chamber
when we interpret the quantities in~\eqref{Eq:estimation} relative to that chamber.

%%%%%%%%%%%%%%%%%%%%%%%%%%%%%%%%%%%%%%%%%%%%%%%%%%%%%%%%%%%%%%%%%%%%%%%%%%%%
\begin{lemma}\label{L:first_estimation}
  In $\calM$ we have
 \begin{enumerate}
   \item 
     ${\displaystyle  \#\calV(F_1,\dotsc,F_l)\ \leq\  2^{\binom{l}{2}}
       \tbinom{l}{l_1, \dotsc,l_n}}$, and

   \item 
     ${\displaystyle  \ubc(C_k)\ \leq\  
        \frac{1}{2}  \cdot 2^k\cdot 2^{\binom{l-k}{2}} 
      \cdot
      \sum \tbinom{l-k}{j_1, \dotsc,j_n} \cdot \prod_{i=1}^n  \tbinom{l_i+2}{j_i+2}}$,\newline
     the sum over all $j_1,\dotsc,j_n$ with $0\leq j_i\leq l_i$ for $i=1,\dotsc,n$ 
     where $j_1+\dotsb+j_n=l-k$.
 \end{enumerate}  
  If we instead estimate these quantities in a single chamber $\Delta$ of $\calM$, then 
  the estimation $(1)$ for $\#\calV(F_1,\dotsc,F_l)$ is unchanged, but that for $(2)$ is simply  
  divided by $2^k$.
\end{lemma}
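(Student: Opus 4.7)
The plan is to deduce both bounds from multihomogeneous Bezout counts in the ambient product $\R\P=\R\P^{l_1}\times\dotsb\times\R\P^{l_n}$, where the product structure is responsible for the factor $\binom{l}{l_1,\dotsc,l_n}$ coming from the expansion of $(t_1+\dotsb+t_n)^l$.

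For part~(1), since $F_m$ has multidegree $2^{l-m}$, the multihomogeneous Bezout theorem applied in $\R\P$ bounds $\#\calV(F_1,\dotsc,F_l)$ by the coefficient of $t_1^{l_1}\dotsb t_n^{l_n}$ in
\[
  \prod_{m=1}^l 2^{l-m}(t_1+\dotsb+t_n)\ =\ 2^{\binom{l}{2}}(t_1+\dotsb+t_n)^l\,,
\]
which is exactly $2^{\binom{l}{2}}\binom{l}{l_1,\dotsc,l_n}$. The bound in a single chamber is at most this global bound, and is thus unchanged.

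For part~(2) I would combine Lemma~\ref{L:boundary} with multihomogeneous Bezout on faces of $\calH$. Since $C_k\subset\mu_k$, any point of $\overline{C_k}\cap\calH$ lies on an $(l-k)$-dimensional face of the stratification. Each unbounded component of the smooth curve $C_k$ has two distinct such boundary points, and at any point of a codimension-$k$ face $F$, Lemma~\ref{L:boundary} permits at most one branch of $\mu_k$ (and hence of $C_k$) in each of the $2^k$ chambers locally adjacent to $F$ under the normal-crossings hypothesis. Double counting yields
\[
  2\cdot\ubc(C_k)\ \leq\ 2^k\cdot\bigl|\overline{C_k}\cap\calH\bigr|\ \leq\ 2^k\sum_{F}\bigl|\overline{C_k}\cap F\bigr|\,,
\]
the sum running over all $(l-k)$-dimensional faces $F$. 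If $F$ is cut out by $s_i$ of the $l_i+2$ hyperplanes in block $i$ (with $s_1+\dotsb+s_n=k$), then $F$ is a product of projective subspaces of dimensions $(l_1-s_1,\dotsc,l_n-s_n)$, and for faces lying in $\overline{\mu_k}$ the non-polynomial constraints $g_1=\dotsb=g_{k-1}=0$ are absorbed into $F$ itself, so only the polynomial restrictions $F_{k+1}|_F,\dotsc,F_l|_F$ remain. Multihomogeneous Bezout on $F$ gives
\[
  \bigl|\overline{C_k}\cap F\bigr|\ \leq\ 2^{\binom{l-k}{2}}\binom{l-k}{l_1-s_1,\dotsc,l_n-s_n}\,.
\]
Summing over the $\prod_i\binom{l_i+2}{s_i}$ faces of each shape and reindexing by $j_i=l_i-s_i$ (so that $\binom{l_i+2}{s_i}=\binom{l_i+2}{j_i+2}$ and $\sum j_i=l-k$ with $0\leq j_i\leq l_i$) collects the estimates into the asserted bound. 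The single-chamber version removes the factor $2^k$ because in a given chamber $\Delta$ only one of the $2^k$ local branches at each boundary point belongs to $\Delta$.

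The main obstacle is justifying the polynomial Bezout estimate on each face despite $C_k$ being defined partly by the non-polynomial functions $g_1,\dotsc,g_{k-1}$. This rests on the fact that along any face $F\subset\overline{\mu_k}$ the constraints $g_m=0$ hold identically in the limiting sense encoded by Lemma~\ref{L:boundary}, so that only $F_{k+1}|_F,\dotsc,F_l|_F$ remain to be imposed; for faces $F$ not contained in $\overline{\mu_k}$ the intersection $\overline{C_k}\cap F\subset\overline{\mu_k}\cap F$ is already a proper subvariety of $F$, so Bezout on $F$ still furnishes the stated upper bound. The complete-intersection and transversality hypotheses needed for Bezout on each face follow from the standing genericity assumptions on the exponents $\alpha^{(k)}$ and the constants $b$.
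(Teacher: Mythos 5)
Your proof is correct and follows essentially the same route as the paper: part~(1) is the multihomogeneous B\'ezout (equivalently Kushnirenko) count $2^{\binom{l}{2}}\binom{l}{l_1,\dotsc,l_n}$, and part~(2) bounds $\ubc(C_k)$ by double-counting ends against boundary points on $(l-k)$-dimensional faces, using Lemma~\ref{L:boundary} for the $2^k$ branch bound and multihomogeneous B\'ezout on each face for the intersection count. Your digression about faces not contained in $\overline{\mu_k}$ is unnecessary (and as stated slightly misleading), since Lemma~\ref{L:boundary} already puts every point of $\overline{C_k}\cap\calH$ on a face lying entirely in $\overline{\mu_k}$; the paper simply sums over all $(l-k)$-dimensional faces as an overcount, as you also do.
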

%%%%%%%%%%%%%%%%%%%%%%%%%%%%%%%%%%%%%%%%%%%%%%%%%%%%%%%%%%%%%%%%%%%%%%%%%%%%

If we use these estimates in the sum~\eqref{Eq:estimation}, we obtain
 \begin{equation}\label{big_est}
   \frac{1}{2} \left[\sum_{k=1}^l 2^k\cdot 2^{\binom{l-k}{2}} \cdot
      \sum_{(j_1,\dotsc,j_n)} 
      \tbinom{l-k}{j_1, \dotsc,j_n} \cdot \prod_{i=1}^n  \tbinom{l_i+2}{j_i+2}\right]
   \quad+\quad 2^{\binom{l}{2}} \tbinom{l}{l_1, \dotsc,l_n}\ .
 \end{equation}
%

%%%%%%%%%%%%%%%%%%%%%%%%%%%%%%%%%%%%%%%%%%%%%%%%%%%%%%%%%%%%%%%%%%%%%%%%%%%%
\begin{lemma}\label{L:further estimation}
  For $l \geq 3$ the sum in brackets in~$\eqref{big_est}$ is less than
\[
   \tfrac{e^4-1}{2}\cdot 2^{\binom{l}{2}} \tbinom{l}{l_1, \dotsc,l_n}\,.
\]
 If we instead use the estimate in a single chamber $\Delta$, dividing by $2^k$ where
 appropriate, then it becomes
\[
   \tfrac{e^2-1}{2}\cdot 2^{\binom{l}{2}} \tbinom{l}{l_1, \dotsc,l_n}\,.
\]
\end{lemma}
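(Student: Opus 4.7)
The plan is to bound the inner multinomial sum in~\eqref{big_est} by a multiple of $\tbinom{l}{l_1,\dotsc,l_n}$, and then to sum the resulting series using the classical identities $\sum_{k\ge 1}2^k/k!=e^2-1$ and $\sum_{k\ge 1}4^k/k!=e^4-1$ to produce the asserted constants $(e^2-1)/2$ and $(e^4-1)/2$.

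First I would apply the multinomial Vandermonde convolution
\[
   \tbinom{l}{l_1,\dotsc,l_n} \;=\; \sum_{(j)}\tbinom{m}{j_1,\dotsc,j_n}\,\tbinom{l-m}{l_1-j_1,\dotsc,l_n-j_n}
\]
(with $m:=l-k$ and the sum over $\sum j_i=m$, $0\le j_i\le l_i$), which, since all summands are nonnegative, yields the pointwise inequality $\tbinom{m}{j_1,\dotsc,j_n}\,\tbinom{l-m}{l_1-j_1,\dotsc,l_n-j_n}\le\tbinom{l}{l_1,\dotsc,l_n}$. Combining this with the algebraic identity $(l_i-j_i)!\,\tbinom{l_i+2}{j_i+2}=(l_i+2)!/(j_i+2)!$, then substituting $a_i=j_i+2$ and invoking the exponential generating function $\prod_i\sum_{a\ge 2}x^a/a!=(e^x-1-x)^n$ (which is legal after relaxing the upper constraint $a_i\le l_i+2$, since the relaxed terms contribute nonnegatively), I obtain
\[
   \sum_{(j)}\tbinom{m}{j_1,\dotsc,j_n}\prod_{i=1}^n\tbinom{l_i+2}{j_i+2}\;\le\;\frac{\tbinom{l}{l_1,\dotsc,l_n}\prod(l_i+2)!}{(l-m)!}\cdot[x^{m+2n}](e^x-1-x)^n.
\]

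Next I would substitute $k=l-m$ in the bracketed sum of~\eqref{big_est}, factor out $\tbinom{l}{l_1,\dotsc,l_n}\cdot 2^{\binom{l}{2}}$, and use the identity $\binom{l}{2}-\binom{l-k}{2}=k(2l-k-1)/2$ to rewrite the exponent of $2$. The remaining series over $k$ is then to be dominated term-by-term by $c^k/(2\,k!)$, with $c=4$ in the all-real case and $c=2$ in the single-chamber case (the latter reflecting the omission of the factor $2^k$); summing via $\sum_{k\ge 1}c^k/(2\,k!)=(e^c-1)/2$ delivers the two constants.

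The hard part will be this last step: the estimate for the inner sum is not tight for all $m$, so the summation must exploit the Gaussian-type decay $2^{-k(2l-k-1)/2}$ of the prefactor away from $k=1$ to absorb the slack coming from the factor $\prod(l_i+2)!$. The hypothesis $l\ge 3$ enters precisely here, to ensure that this decay dominates the boundary term $k=l$; the cases $l\le 2$ are handled separately and are not the concern of the lemma.
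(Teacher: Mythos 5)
The opening steps of your proposal overlap with the paper's: you invoke the same Vandermonde convolution
\[
   \tbinom{l}{l_1,\dotsc,l_n}\;=\;\sum_{(j)}\tbinom{l-k}{j_1,\dotsc,j_n}\tbinom{k}{l_1-j_1,\dotsc,l_n-j_n}
\]
and the same algebraic identity relating $\prod\binom{l_i+2}{j_i+2}$ to $\prod(l_i+2)!/(j_i+2)!$ and the multinomial $\tbinom{k}{l_1-j_1,\dotsc,l_n-j_n}$. But from there the paths diverge in a way that is fatal. The paper never sums a termwise bound on $\tbinom{l-k}{j}$; it instead keeps $\tbinom{l-k}{j}$ intact, pairs the $(j)$-term of $a_k$ with the $(j)$-term of the Vandermonde expansion of $a_0$, and reduces everything to the \emph{single} scalar estimate
\[
   \prod_{i=1}^n\frac{(l_i+2)!}{(j_i+2)!}\ \le\ (l+1)^k\ \le\ 2^{\,k-1+\binom{l}{2}-\binom{l-k}{2}}\,,
\]
valid because each factor $(l_i+2)(l_i+1)\cdots(j_i+3)$ is a product of $l_i-j_i$ integers each at most $l+1$, and $\sum_i(l_i-j_i)=k$. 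This gives $a_k\le \frac{2^{k-1}}{k!}a_0$ and the lemma follows immediately. Crucially, the paper's proof of the displayed inequality uses $l\ge 5$ (via $l+1<2^{(l+1)/2-1/l}$); the cases $l=3,4$ are finished by direct computation. Your remark that $l\ge 3$ enters ``to ensure the decay dominates the boundary term $k=l$'' misreads the structure: the bound is actually hardest at $k=1$, and the case split is at $l\ge 5$ versus $l\in\{3,4\}$, not a tail effect at $k=l$.

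The genuine gap, however, is the one you flagged yourself at the end: the last step does not work. Your route replaces the one--to--one termwise pairing by a pointwise bound $\tbinom{l-k}{j}\le \tbinom{l}{l_1,\dotsc,l_n}/\tbinom{k}{l_1-j_1,\dotsc,l_n-j_n}$ and then relaxes the constraint $j_i\le l_i$ to obtain the generating-function coefficient $[x^{(l-k)+2n}](e^x-1-x)^n$. Both moves throw away structure, and the second one can be catastrophically lossy. Concretely, take $n=l=10$ and all $l_i=1$. Then $a_0=2^{45}\,10!$, while a direct computation gives $[x^{29}](e^x-1-x)^{10}\approx 5.7\times 10^{-4}$, so your bound on $a_1$ is
\[
   2^{\binom{9}{2}}\cdot\frac{10!\,\bigl(3!\bigr)^{10}}{1!}\cdot[x^{29}](e^x-1-x)^{10}\ \approx\ 68\,a_0\,,
\]
whereas the true value is $a_1\approx 0.006\,a_0$ and the paper's bound gives $a_1\le a_0$. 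Already $2\cdot 68\,a_0$ exceeds the total budget $\tfrac{e^4-1}{2}a_0\approx 26.8\,a_0$ (and $68\,a_0$ dwarfs $\tfrac{e^2-1}{2}a_0\approx 3.2\,a_0$ in the single-chamber case), so your series cannot be dominated by $\sum c^k/(2k!)$ for any $c\in\{2,4\}$. The slack introduced by dropping $\tbinom{k}{l_1-j_1,\dotsc,l_n-j_n}$ and by allowing $j_i>l_i$ grows with $l$ when the $l_i$ are small, so this is not a boundary case: the approach fails for a whole family of parameters. To repair it you would need to retain the multinomial $\tbinom{k}{l_1-j_1,\dotsc,l_n-j_n}$ in the denominator and work with the restricted sum, at which point you are led back to the paper's termwise inequality and the $(l+1)^k$ estimate.
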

%%%%%%%%%%%%%%%%%%%%%%%%%%%%%%%%%%%%%%%%%%%%%%%%%%%%%%%%%%%%%%%%%%%%%%%%%%%%

Theorem~\ref{T:reduction} with $l \geq 3$ now follows from Lemma~\ref{L:further estimation}.
For $l=2$, Theorem~\ref{T:reduction} is a consequence of~\cite{LRW03}, where it is proved that
a system of two trinomial equations in two variables has at most $5$ positive solutions,
and thus at most $20$ real solutions.

It is possible to further lower the estimate for $\ubc(C_k)$ in
Lemma~\ref{L:first_estimation} and the estimates in  Lemma~\ref{L:further estimation}, but
this will not significantly affect our bounds as the estimate for $\#\calV(F_1,\dotsc,F_l)$
dominates these estimates.

%%%%%%%%%%%%%%%%%%%%%%%%%%%%%%%%%%%%%%%%%%%%%%%%%%%%%%%%%%%%%%%%%%%%%%%%
We establish Lemma~\ref{L:boundary} in Section~\ref{S:smooth},
Theorem~\ref{T:Modified_Kh_Ro} in Section~\ref{S:Thm7},
Lemma~\ref{L:first_estimation} in Section~\ref{L:Main_est},
and finally Lemma~\ref{L:further estimation} in Section~\ref{S:further estimation}.

%%%%%%%%%%%%%%%%%%%%%%%%%%%%%%%%%%%%%%%%%%%%%%%%%%%%%%%%%%%%%%%%%%%%%%%%%%%%%%%%%%
%
\subsection{Proof of Lemma~\ref{L:boundary}}\label{S:smooth}

Set $F(\bz)=(f_1(\bz),f_2(\bz),\dotsc,f_{k-1}(\bz))$, where
\[
   f_m(\bz)\ =\ \prod_{i=1}^n\Bigl( |q_i(\bz_i)|^{\alpha^{(m)}_{i,0}}
    \cdot \prod_{j=1}^{l_i} |z_{i,j}|^{\alpha^{(m)}_{i,j}}\Bigr)\ .
\]
Then $\mu_k=F^{-1}(d_1,\dotsc,d_{k-1})$.
We would like to conclude that $\mu_k$ is smooth and has dimension $l{-}k{+}1$ using
Sard's Theorem.

To do that, observe that if the exponents $\alpha^{(m)}_{i,j}$ are sufficiently general
(for example, when the matrix whose rows are the vectors $\alpha^{(m)}$ for $m=1,\dotsc,k{-}1$
 has no vanishing maximal minor),
then $F$ is a $C^\infty$ map $\calM\to\R^{k-1}_>$ with dense image.
Since 
\[
   d_m^{-1}\ =\ \prod_{i=1}^n |b_i|^{\alpha^{(m)}_{i,0}}\ ,
\]
we see that choosing $b_i$ and $\alpha^{(m)}_{i,0}$ we can ensure that $(d_1,\dotsc,d_k)$ is a
regular value of the map $F$, and so by Sard's Theorem, $\mu_k$ is indeed smooth.

The second statement follows by arguments similar to the proof of Lemma~3.8 in~\cite{BS07}.
That proof requires the genericity hypothesis on the matrix of exponent vectors.

%%%%%%%%%%%%%%%%%%%%%%%%%%%%%%%%%%%%%%%%%%%%%%%%%%%%%%%%%%%%%

%%%%%%%%%%%%%%%%%%%%%%%%%%%%%%%%%%%%%%%%%%%%%%%%%%%%%%%%%%%%%%%%%%%%%%%%%
%
\subsection{A variant of the Khovanskii-Rolle Theorem.}\label{S:Thm7}

Suppose that we have a system of equations
 \begin{equation}\label{Eq:psisystem}
  \psi_1\ =\ \dotsb\ =\ \psi_{l-1}\ =\ \psi_l\ =\ 0
 \end{equation}
with finitely many solutions in a domain $\Delta\subset\R^l$, and all are non-degenerate. 
Let $C$ be the curve obtained by dropping the last function $\psi_l$
from~\eqref{Eq:psisystem}. 
Let $J$ be the Jacobian determinant of $\psi_1,\dotsc,\psi_l$.\medskip

%%%%%%%%%%%%%%%%%%%%%%%%%%%%%%%%%%%%%%%%%%%
\noindent{\bf Khovanskii-Rolle Theorem.}
{\it
   We have
 \begin{equation}\label{Eq:Kh-Ro}
   \#\calV(\psi_1,\dotsc,\psi_l)\ \leq\ 
   \ubc(C)\ +\ \#\calV(\psi_1,\dotsc,\psi_{l-1},J)\ .
 \end{equation}}
%%%%%%%%%%%%%%%%%%%%%%%%%%%%%%%%%%%%%%%%%%

When the $\psi$ are sums of logarithms of degree 1 polynomials, the Jacobian $J$ is a
polynomial of low degree, after multiplying by the degree 1 polynomials.
This may be iterated as follows.
Drop $\psi_{l-1}$ from the system $\psi_1=\dotsb=\psi_{l-1}=J=0$ to obtain a new curve, and
an inequality of the form~\eqref{Eq:Kh-Ro} involving the unbounded components of
this new curve and a system with two Jacobians which are polynomials of low degree, and so on.

This requires that the successive systems have finitely many solutions, which is
simply not the case, as we have insufficient freedom in the
original system~\eqref{Eq:reduction} to ensure that.
It turns out that the inequality~\eqref{Eq:Kh-Ro} still holds under perturbations of the  
Jacobian, and this is the key to the statement and proof of
Theorem~\ref{T:Modified_Kh_Ro}.\medskip

We compute the multidegree of the numerator of a Jacobian matrix consisting
of partial derivatives of some of the $\phi_m(\bz)$ and of some
polynomials of given multidegrees.
Since $\phi_m(\bz)$ is a linear combination of logarithms of absolute values of the
variables $z_{i,j}$ and the polynomials $q_i(\bz_i)$ the common denominator of
the partial derivatives is
\[
   \Blue{\delta}\ :=\ \prod_{i=1}^n \Bigr( q_i(\bz_i)\cdot \prod_{j=1}^{l_i}
   z_{i,j}\Bigl)\,.
\]
Sine $\delta$ does not vanish on $\calM$, multiplying by $\delta$ will not change any zero set
in $\calM$.

%%%%%%%%%%%%%%%%%%%%%%%%%%%%%%%%%%%%%%%%%%%%%%%%%%%%%%%%%%%%%%%%%%%%%%%%%%%
\begin{theorem}\label{T:det_deg}
 Suppose that for each $m=k{+}1, k{+}2,\dotsc,l$, $F_m(\bz)$ is a polynomial of
 multidegree $d_m$.
 Then the numerator 
\[
   \delta\cdot \det \Jac (\phi_1,\dotsc,\phi_k,\ F_{k+1},\dotsc,F_l)
\]
 of the Jacobian determinant has multidegree $1+d_{k+1}+d_{k+2}+\dotsb+d_l$.
\end{theorem}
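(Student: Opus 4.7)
The strategy is to prove both claims at once: that
\[
N\ :=\ \delta\cdot\det\Jac(\phi_1,\dotsc,\phi_k,F_{k+1},\dotsc,F_l)
\]
is actually a polynomial (i.e.\ the common denominator of the Jacobian determinant divides $\delta$, not merely $\delta^k$), and that it has block-$i$ degree at most $1+D$ for each $i$, where $D:=d_{k+1}+\dotsb+d_l$. The polynomial claim is the real content; the multidegree bound is a direct degree count once polynomiality is known.

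For the polynomial claim, write $M$ for the Jacobian matrix. For each $m\leq k$ the entry $\partial\phi_m/\partial z_{i,j}=\alpha_{i,0}^{(m)}/q_i(\bz_i)+\alpha_{i,j}^{(m)}/z_{i,j}$ splits into two pieces, so by multilinearity row $m$ of $M$ can be written as
\[
\sum_{i=1}^n\frac{\alpha_{i,0}^{(m)}}{q_i(\bz_i)}\,\mathbf{E}_i\ +\ \sum_{i=1}^n\sum_{j=1}^{l_i}\frac{\alpha_{i,j}^{(m)}}{z_{i,j}}\,\mathbf{e}_{i,j},
\]
where $\mathbf{E}_i$ is the row with entry $1$ in every column of block $i$ and $0$ elsewhere, and $\mathbf{e}_{i,j}$ is the standard basis row for column $(i,j)$. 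Expanding $\det M$ multilinearly over the $k$ rows with $m\leq k$ produces a sum of determinants in each of which row $m$ has been replaced by some $\mathbf{E}_i$ or some $\mathbf{e}_{i,j}$, with a scalar factor $\alpha_{i,0}^{(m)}/q_i$ (denominator $q_i$) or $\alpha_{i,j}^{(m)}/z_{i,j}$ (denominator $z_{i,j}$). The crucial observation is that any summand in which two replaced rows are the \emph{same} $\mathbf{E}_i$ (or the same $\mathbf{e}_{i,j}$) vanishes by linear dependence. Thus every surviving summand contributes a denominator of the form $\prod_{i\in I}q_i\cdot\prod_{(i,j)\in T}z_{i,j}$ with the indices $I$ pairwise distinct and the columns $T$ pairwise distinct, and any such product divides $\delta$. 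Summing then gives a rational function with denominator dividing $\delta$, so $N$ is a polynomial.

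For the multidegree, scale each row $m\leq k$ of $M$ by $\delta$ to obtain a polynomial matrix $M'$ with $\det M'=\delta^k\det M$. A direct check shows that in row $m\leq k$ the entry in column $(i,j)$ has block-$i$ degree $l_i$ while the entry in column $(i',j')$ with $i'\neq i$ has block-$i$ degree $1+l_i$, and for $m>k$ the entry $\partial F_m/\partial z_{i',j'}$ has block-$i$ degree $d_m-1$ if $i'=i$ and $d_m$ otherwise. For any permutation in the expansion of $\det M'$, letting $k_i$ and $m_i$ count the rows of the first and second groups whose selected column lies in block $i$ (so $k_i+m_i=l_i$), a short calculation shows the block-$i$ degree of the corresponding term simplifies to
\[
(k-1)l_i+k+D,
\]
independently of $k_i$ and $m_i$. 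Thus $\det M'$ has block-$i$ degree at most $(k-1)l_i+k+D$, and writing $\det M'=\delta^{k-1}\cdot N$ and subtracting the block-$i$ degree $(k-1)(1+l_i)$ of $\delta^{k-1}$ leaves $1+D$ as the bound on the block-$i$ degree of $N$. The main obstacle is the polynomial claim: without the multilinear cancellations one obtains only that $\delta^k\det M$ is polynomial, which would yield only the weaker multidegree bound $k+D$ and would be useless for the rest of the argument.
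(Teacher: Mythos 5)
Your proof is correct, and it takes a route that is genuinely different in its details from the one in the paper. The paper performs a Laplace expansion along the first $k$ rows, reducing to a product of a $k\times k$ minor $M_1$ of logarithmic derivatives and an $(l{-}k)\times(l{-}k)$ minor $M_2$ of polynomial derivatives; the polynomiality and degree of $\delta_2\det M_2$ are then immediate, while the claim that $\delta_1\det M_1$ is a polynomial of multidegree $1$ is isolated as a separate lemma (Lemma~\ref{L:Minor}), whose proof is itself a multilinear expansion similar in spirit to yours, though less explicit. You instead split each of the first $k$ rows into its $\mathbf{E}_i$ and $\mathbf{e}_{i,j}$ pieces \emph{before} taking the determinant, so that the ``two equal rows'' vanishing kills exactly the terms whose denominator would fail to divide $\delta$; this gives a crisp one-shot proof that $N=\delta\det M$ is a polynomial. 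Your degree count is the more notable difference: rather than tracking degrees through the cancellations, you work with $\det M' = \delta^k\det M$ and observe that \emph{every} permutation summand has the same block-$i$ degree $(k-1)l_i+k+D$ regardless of how the selected columns distribute over blocks, then divide out the known degree of $\delta^{k-1}$. This avoids the paper's product decomposition entirely and makes the degree bound a pure counting identity. Both arguments ultimately rely on the fact that $\det M$ has denominator dividing $\delta$ rather than $\delta^k$; yours makes the mechanism explicit, while the paper's is buried in Lemma~\ref{L:Minor}, whose final step is ``left to the reader.''
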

%%%%%%%%%%%%%%%%%%%%%%%%%%%%%%%%%%%%%%%%%%%%%%%%%%%%%%%%%%%%%%%%%%%%%%%%%%%

%%%%%%%%%%%%%%%%%%%%%%%%%%%%%%%%%%%%%%%%%%%%%%%%%%%%%%%%%%%%%%%%%%%%%%%%%%%
\begin{proof}
 If we expand the determinant of the Jacobian matrix along its first $k$ rows,
 we obtain a sum of products of $k\times k$ determinants of partial derivatives
 of the logarithms $\phi_m$ by $(l{-}k)\times(l{-}k)$ determinants of partial derivatives
 of the polynomials $F_m(\bz)$. 
 We show that the statement of the theorem holds for each term in this sum.

 A product $\pm\det M_1 \cdot \det M_2$ occurs in this expansion only if 
 $M_1$ is a  $k\times k$ matrix of partial derivatives 
 $\frac{\partial\phi_m}{\partial z_{i,j}}$, $M_2$ a $(l{-}k)\times(l{-}k)$  
 matrix of partial derivatives $\frac{\partial F_m}{\partial z_{i,j}}$,
 and the partial derivatives in $M_1$ are distinct from the partial
 derivatives in $M_2$.
 Thus if $\delta_1$ is the product of all linear polynomials $q_i(\bz_i)$ and of the
 variables occurring as partial derivatives in $M_1$ and
 $\delta_2$ the product of the  variables occurring as partial derivatives in $M_2$,
 then $\delta=\delta_1\cdot\delta_2$ and so
\[
   \delta\bigl(\det M_1 \cdot \det M_2\bigr)\ =\ 
         (\delta_1\det M_1)\cdot (\delta_2\det M_2)\,.
\]

 If we set $M'_2$ to be the matrix obtained from $M_2$ by multiplying each column by the
 corresponding variable, then $\delta_2\det M_2=\det M_2'$.
 A typical entry of $M'_2$ is
\[
    z_{i,j}\frac{\partial F_m(\bz)}{\partial z_{i,j}}\,,
\]
 which is a polynomial of multidegree $d_m$.
 It follows that $\det M'_2$ has multidegree $d_{k{+}1}+d_{k{+}2}+\dotsb+d_l$.
 The theorem now follows from Lemma~\ref{L:Minor} below which shows that 
 $\delta_1\cdot \det M_1$ has multidegree $1$.
\end{proof}
%%%%%%%%%%%%%%%%%%%%%%%%%%%%%%%%%%%%%%%%%%%%%%%%%%%%%%%%%%%%%%%%%%%%%%%%%%% 

Let $M$ be any square submatrix of the Jacobian matrix
$\Blue{\mbox{Jac}}:=( \partial \phi_k/\partial z_{i,j})$.
Since
 \begin{equation}\label{Eq:partialPhi}
   \frac{\partial \phi_k}{\partial z_{i,j}}\ =\ 
    \frac{\alpha^{(k)}_{i,j}}{z_{i,j}}\ +\ \frac{\alpha^{(k)}_{i,0}}{q_i(\bz_i)}\ ,
 \end{equation}
the entries of a submatrix $M$ of $\mbox{Jac}$ will have denominators that include the
variables $z_{i,j}$ corresponding to the columns of $M$, as well as some of the degree 1
polynomials $q_i(\bz_i)$.
Let \Blue{$\delta_M$} be the product of all degree 1 polynomials $q_i(\bz_i)$, together
with all these variables corresponding to columns of $M$.

%%%%%%%%%%%%%%%%%%%%%%%%%%%%%%%%%%%%%%%%%%%%%%%%%%%%
\begin{lemma}\label{L:Minor}
  $\delta_M \det(M)$ is a polynomial with multidegree $1$.
\end{lemma}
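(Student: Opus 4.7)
The plan is to exploit the multilinearity of the determinant in the columns of $M$ together with the specific two-term structure of the partial derivatives~\eqref{Eq:partialPhi}. Index the columns of $M$ by the pairs $(i,j)$ labeling the variables they differentiate, and decompose column $(i,j)$ as $u_{i,j}+v_i$, where $u_{i,j}$ is the column with $k$-th entry $\alpha^{(k)}_{i,j}/z_{i,j}$ and $v_i$ is the column with $k$-th entry $\alpha^{(k)}_{i,0}/q_i(\bz_i)$. The crucial feature is that $v_i$ depends only on the block index $i$, not on $j$. Expanding the determinant by multilinearity gives
\[
 \det(M)\ =\ \sum_S \det(M_S)\,,
\]
where $S$ ranges over subsets of the column set and $M_S$ has $v_i$ in each column $(i,j)\in S$ and $u_{i,j}$ in each column $(i,j)\notin S$.

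The key observation is that if $S$ contains two columns $(i,j)$ and $(i,j')$ sharing the same first coordinate $i$, then $M_S$ has two identical columns (both equal to $v_i$), so $\det(M_S)=0$. Only subsets $S$ whose column indices have pairwise distinct first coordinates contribute; write $I(S)$ for the set of those coordinates. For such $S$, I scale column $(i,j)\in S$ by $q_i(\bz_i)$ and column $(i,j)\notin S$ by $z_{i,j}$. Under this rescaling the column $v_i$ becomes the constant vector $(\alpha^{(k)}_{i,0})_k$ and the column $u_{i,j}$ becomes the constant vector $(\alpha^{(k)}_{i,j})_k$. Hence the rescaled matrix $M'_S$ has entries independent of $\bz$, and
\[
 \det(M_S)\ =\ \frac{\det(M'_S)}{\displaystyle\prod_{(i,j)\in S} q_i(\bz_i)\cdot \prod_{(i,j)\notin S} z_{i,j}}\,.
\]

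Multiplying through by $\delta_M=\prod_{i=1}^n q_i(\bz_i)\cdot \prod_{(i,j)\in\mathrm{cols}(M)} z_{i,j}$, the denominators cancel and I obtain
\[
 \delta_M\det(M_S)\ =\ \det(M'_S)\cdot \prod_{i\notin I(S)} q_i(\bz_i)\cdot \prod_{(i,j)\in S} z_{i,j}\,.
\]
For each block index $i\in\{1,\dotsc,n\}$ exactly one of the remaining factors involves $\bz_i$: either $q_i(\bz_i)$ if $i\notin I(S)$, or the single variable $z_{i,j_i}$ (where $j_i$ is the unique $j$ with $(i,j_i)\in S$) if $i\in I(S)$. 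Each such factor has degree $1$ in $\bz_i$ and degree $0$ in every other block, so $\delta_M\det(M_S)$ has multidegree $1$. Summing over the surviving subsets $S$ preserves multidegree $1$, giving the lemma.

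The only genuinely delicate step is the cancellation that kills the subsets $S$ having repeated first coordinates; after it is in hand the degree bookkeeping is essentially automatic, amounting to tracking which factor of $\delta_M$ is consumed by which column and observing that each block $\bz_i$ receives exactly one linear factor.
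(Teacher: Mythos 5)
Your argument is correct and is essentially the paper's own proof made fully explicit: the paper also splits each column $\partial\phi/\partial z_{i,j}$ into the $z_{i,j}^{-1}$-part plus the $q_i^{-1}$-part and invokes multilinearity and antisymmetry, leaving the bookkeeping to the reader, which is exactly what you carry out. The only cosmetic difference is that the paper first Laplace-expands to reduce to the case of a single block $\bz_i$ before applying multilinearity, whereas you run the multilinear expansion over all columns at once; the decisive observation — that the $q_i^{-1}$-column $v_i$ is independent of $j$, so any term using it twice in block $i$ vanishes by antisymmetry — is identical.
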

%%%%%%%%%%%%%%%%%%%%%%%%%%%%%%%%%%%%%%%%%%%%%%%%%%%%

%%%%%%%%%%%%%%%%%%%%%%%%%%%%%%%%%%%%%%%%%%%%%%%%%%%%
\begin{proof}
 
If no variable in the set $\bz_i$ occurs in $M$, then these variables appear in
$\delta_M\det(M)$ only as the degree 1 polynomial $q_i(\bz_i)$ contained in $\delta_M$.
Suppose now that some variables in $\bz_i$ occur in $M$.  If we expand $\det(M)$ along
the columns corresponding to the variables in $\bz_i$, we obtain a sum of products 
$\det M_i\cdot \det M_i'$ of determinants of submatrices, where $M_i$ only contains
variables from $\bz_i$ and $M'_i$ contains no variables from $\bz_i$.
Hence the statement reduces to the case where only variables in $\bz_i$ occur in
$M$. By~\eqref{Eq:partialPhi}, the columns of $M$ all have the form 
\[
   \frac{v_j}{z_{i,j}}\ +\ \frac{v}{q_i(\bz_i)}\,,
\]
where $v_j$ and $v$ are scalar vectors, and $v$ is the same for all columns.
The determinant is the exterior product of these columns, which we may expand using
multilinearity and antisymmetry. The lemma follows immediately from the form of this expansion, which
we leave to the reader.  
\end{proof}
%%%%%%%%%%%%%%%%%%%%%%%%%%%%%%%%%%%%%%%%%%

%%%%%%%%%%%%%%%%%%%%%%%%%%%%%%%%%%%%%%%%%%%%%%%%%%%%%%%%%%%%%%%%%%%%%%%%%%%%
\begin{proof}[Proof of Theorem~$\ref{T:Modified_Kh_Ro}$]
 We prove both statements by downward induction on $k$, with the first case $k=l$. 
 Observe that (1) holds for $k=l$.
 Suppose that (1) holds for some $k\leq l$.
 Set $\Blue{J}$ to be the numerator of the Jacobian determinant
\[
   \det\Jac(\phi_1,\dotsc,\phi_k,\,F_{k+1},\dotsc,F_l)\,.
\]
 If we set $J_k:=J$, then the usual Khovanskii-Rolle Theorem will imply that statement (2)
 holds, but we would like to ensure that (1) holds for $k{-}1$.

 By Theorem~\ref{T:det_deg}, $J$ has multidegree 
\[
   1+2^{l-k-1}+2^{l-k-2}+\dotsb+2^{l-(l-1)}+2^{l-l}
   \ =\ 2^{l-k}\,.
\]
 By condition (1) $J$ will not vanish at any point of 
 $\calV(\phi_1,\dotsc,\phi_k,\,F_{k+1},\dotsc,F_l)$.
 A general polynomial of multidegree $2^{l-k}$ 
 will intersect the curve $C_k$ as well as the surface 
 $\calV(\phi_1,\dotsc,\phi_{k-2},\,F_{k+1},\dotsc,F_l)$ transversally in $\calM_\C$.
 Let $F_k$ be a polynomial of multidegree $2^{l-k}$ which has the same signs as $J$ at the
 points of  $\calV(\phi_1,\dotsc,\phi_k,\,F_{k+1},\dotsc,F_l)$, but which is also 
 general enough so that (1) holds for $k{-}1$.

 Then (2) holds.
 The reason is the same as for the Khovanskii-Rolle Theorem:
 along any arc of $C_k$ between any two consecutive points where
 $\phi_k$ vanishes, there must be a zero of $J$, as it has different signs at these
 two points.
 But $F_k$ has the same signs at these points as does $J$, so it also must vanish on 
 the arc of $C_k$ between them.
\end{proof}
%%%%%%%%%%%%%%%%%%%%%%%%%%%%%%%%%%%%%%%%%%%%%%%%%%%%%%%%%%%%%%%%%%%%%%%%%%%%

\begin{remark}
 The necessity of this modification of the Khovanskii-Rolle Theorem is that 
 in symbolic computations (done in positive characteristic) when 
 $l_1=l_2=l_3=1$, if we simply set
 \begin{eqnarray*}
   J_3&:=&\delta\det\Jac(\phi_1,\phi_2,\phi_3)\,,\\
   J_2&:=&\delta\det\Jac(\phi_1,\phi_2,J_3)\,,\quad\mbox{and}\\
   J_1&:=&\delta\det\Jac(\phi_1,J_2,J_3)\,,
 \end{eqnarray*}
 then these successive Jacobians do not meet transversally.
 Even worse (for the application of the Khovanskii-Role Theorem), when
 $n=4$ and each $l_i=1$, the computed Jacobians have a common curve of intersection.
\end{remark}

%%%%%%%%%%%%%%%%%%%%%%%%%%%%%%%%%%%%%%%%%%%%%%%%%%%%%%%%%%%%%%%%%%%%%%%%%
%
\subsection{Proof Lemma~\ref{L:first_estimation}}\label{L:Main_est}

For the first statement of Lemma~\ref{L:first_estimation}, in the system 
 \begin{equation}\label{Eq:big_system}
   F_1(\bz)\ =\ F_2(\bz)\ =\ \dotsb =\ F_l(\bz)\ =\ 0\,,
 \end{equation}
 the polynomial $F_k$ has multidegree $2^{l-k}$, by Theorem~\ref{T:Modified_Kh_Ro}.
 Thus the number of non-degenerate real solutions to~\eqref{Eq:big_system} is at most the 
 number of complex solutions to a multilinear system multiplied by
\[
   2^{l-1}\cdot 2^{l-2}\dotsb 2^2 \cdot 2^1 \cdot 2^0\ =\ 
   2^{\binom{l}{2}}\,.
\]
 A multilinear system with blocks of variables 
 $\bz_1,\dotsc,\bz_n$ of respective sizes $l_1,\dotsc,l_n$, has at most
 $\binom{l}{l_1,\dotsc,l_n}$ non-degenerate complex solutions.
 This is a special case of Kuchnirenko's Theorem~\cite{BKK} as the Newton
 polytope of such a multilinear polynomial is the product of unit simplicies of dimensions
 $l_1,\dotsc,l_n$ which has volume $\frac{1}{l_1!}\dotsb\frac{1}{l_n!}$.
 Thus
\[
   2^{\binom{l}{2}}\cdot \tbinom{l}{l_1,\dotsc,l_n}
\]
 is a bound for the number of non-degenerate real solutions to the system~\eqref{Eq:big_system}
 in any domain in $\R\P^{l_1}\times\dotsb\times \R\P^{l_n}$.\medskip

 The second statement is an estimate for the number of unbounded components of the curve
 $C_k$ in either $\calM$ or in some chamber $\Delta$ of $\calM$.
 We first estimate the number of points in either the hypersurface arrangement $\calH$
 (the boundary of $\calM$) or in the boundary of the chamber $\Delta$ that lie in the closure  
 $\overline{C_k}$ of $C_k$.
 We use this to estimate the number of unbounded components of $C_k$.

 Note that $C_k$ is the subset of $\mu_k$ on which
 \begin{equation}\label{eq:Cksystem}
    F_{k+1}(\bz)\ =\ \dotsb\ =\ F_l(\bz)\ =\ 0\,,
 \end{equation}
 holds, so the points of $\overline{C_k}\cap\calH$ are a subset of the points of
 $\overline{\mu_k}\cap\calH$ where~\eqref{eq:Cksystem} holds.

 By Lemma~\ref{L:boundary}, $\overline{\mu_k}\cap\calH$ is a union of $l{-}k$ dimensional
 faces of $\calH$.
 Each such face is the intersection of $k$ of the hypersurfaces in $\calH$ and is therefore
 isomorphic to a product 
 \begin{equation}\label{Eq:RPjprod}
   \R\P^{j_1}\times\dotsb\times\R\P^{j_n}
 \end{equation}
 where $0\leq j_i\leq l_i$ for $i=1,\dotsc,n$ and $j_1+\dotsb+j_n=l-k$.
 By the same arguments we just gave for the first statement, the 
 system~\eqref{eq:Cksystem} has at most 
\[
   2^{\binom{l-k}{2}}\cdot \tbinom{l-k}{j_1,\dotsc,j_n}
\]
 solutions on the face~\eqref{Eq:RPjprod}.

 Each face~\eqref{Eq:RPjprod} is the intersection of exactly $k$ hypersurfaces in $\calH$,
 as these hypersurfaces form a normal crossings divisor.
 Each hypersurface is pulled back from a hyperplane in the arrangement $\calH_i$ in some
 $\R\P^{l_i}$ factor of $\R\P$. 
 If we set $k_i:=l_i-j_i$, then the face~\eqref{Eq:RPjprod} is an
 intersection of $k_i$ hypersurfaces pulled back from $\calH_i$, for $i=1,\dotsc,n$.
 Since $\calH_i$ consists of $l_i+2$ hyperplanes in $\R\P^{l+i}$, there
 are  
\[
   \prod_{i=1}^n \binom{l_i+2}{k_i}\ =\ 
   \prod_{i=1}^n \binom{l_i+2}{j_i+2}
\]
 faces of the form~\eqref{Eq:RPjprod}.
 Thus the number of points of $\overline{C_k}$ lying in $\calH$ is at most 
\[
   2^{\binom{l-k}{2}}\cdot \sum \tbinom{l-k}{j_1,\dotsc,j_n}\cdot
     \prod_{i=1}^n \binom{l_i+2}{j_i+2}\ ,
\]
the sum is over all $j_1,\dotsc,j_n$ with $0\leq j_i\leq l_i$ for $i=1,\dotsc,n$
where $j_1+\dotsb+j_n=l-k$.

Each unbounded component of the curve $C_k$ has two ends which approach points of
$\overline{C_k}\cap\calH$.
We claim that each point of $\overline{C_k}\cap\calH$ has at most $2^k$ branches of $C_k$
approaching it, and thus
\[
   2\cdot \ubc(C_k)\ \leq\ 2^k\cdot 
  2^{\binom{l-k}{2}}\cdot \sum \tbinom{l-k}{j_1,\dotsc,j_n}\cdot
     \prod_{i=1}^n \binom{l_i+2}{j_i+2}\,,
\]
the same sum as before.
This gives the estimate (2) in Lemma~\ref{L:first_estimation}.

To see the claim, note that by Lemma~\ref{L:boundary}, $\mu_k$ has at most $2^k$ branches in
the neighborhood of each point in an $l{-}k$ dimensional face of $\calH$, one for each
incident chamber.
Since $C_k$ consists of the points of $\mu_k$ where~\eqref{eq:Cksystem} holds, the claim
follows as the polynomials in~\eqref{eq:Cksystem} are sufficiently general so
that their common zero set is transverse to any $l{-}k$ face of $\calH$.

%%%%%%%%%%%%%%%%%%%%%%%%%%%%%%%%%%%%%%%%%%%%%%%%%%%%%%%%%%%%%%%%%%%%%%%%%%%%
%
%
\subsection{Proof of Lemma~\ref{L:further estimation} }\label{S:further estimation}
We assume as before that $n>1$.
For $k=0,1,\dotsc,l$, set
\[
   \Blue{a_k}\ :=\ 2^{\binom{l-k}{2}}\cdot \sum \tbinom{l-k}{j_1,\dotsc,j_n}\cdot
     \prod_{i=1}^n \binom{l_i+2}{j_i+2}
\]
the sum over all $j_1,\dotsc,j_n$ with $0\leq j_i\leq l_i$ for $i=1,\dotsc,n$ 
where $j_1+\dotsb+j_n=l-k$.
Note that $a_0=2^{\binom{l}{2}} \tbinom{l}{l_1, \dotsc,l_n}$.
The sum in brackets in~\eqref{big_est} is $\sum_{k=1}^l 2^k \cdot a_k$
and becomes $\sum_{k=1}^l a_k$ for a single chamber $\Delta$.
When $l=3$ and $l=4$, these quantities can be explicitely computed, proving the lemma in
those cases. 
Assume now that $l \geq 5$. We show that
\begin{equation} \label{E:intbound}
a_k \ \leq\  \frac{2^{k-1}}{k!} \cdot a_0 \quad \mbox{for} \quad k=1,\dotsc,l.
\end{equation}
The lemma follows as
\[
  \sum_{k=1}^l a_k \ \leq \
     \Bigl(\sum_{k=1}^l \frac{2^{k-1}}{k!} \Bigr)
      \cdot a_0
    \ <\ \Bigl(\sum_{k=1}^{\infty} \frac{2^{k-1}}{k!} \Bigr)
      \cdot a_0\ =\ \frac{e^2-1}{2}\cdot a_0\,,
\]
and similarly
\[
  \sum_{k=1}^l 2^k a_k \ \leq \
     \Bigl(\sum_{k=1}^l \frac{4^{k-1}}{k!} \Bigr)
      \cdot a_0
    \ < \ \Bigl(\sum_{k=1}^{\infty} \frac{4^{k-1}}{k!} \Bigr)
      \cdot a_0 \ =\ \frac{e^4-1}{2}\cdot a_0\,.\bigskip
\]

For any $k=1,2,\dotsc,l$, we have
 \[
     \tbinom{l}{l_1, \dotsc,l_n}=\sum \tbinom{l-k}{j_1,\dotsc,j_n}\cdot
    \tbinom{k}{l_1-j_1,\dotsc,l_n-j_n}\,,
 \]
 the sum over $j_1,\dotsc,j_n$ with $0\leq j_i\leq l_i$ for $i=1,\dotsc,n$ where $j_1+\dotsb+j_n=l-k$.
To prove~\eqref{E:intbound}, it suffices thus to prove that
 \begin{equation} \label{E:intboundb}
  \prod_{i=1}^n \binom{l_i+2}{j_i+2} \ \leq\ 
  \frac{2^{k-1}}{k !} \cdot 2^{\binom{l}{2}-\binom{l-k}{2}}
  \tbinom{k}{l_1-j_1,\dotsc,l_n-j_n}\,.
 \end{equation}

For this, note that 
 \[
   \prod_{i=1}^n \binom{l_i+2}{j_i+2}\ =\ 
   \frac{1}{k !} \cdot \tbinom{k}{l_1-j_1,\dotsc,l_n-j_n} 
   \cdot \prod_{i=1}^n \frac{(l_i+2)!}{(j_i+2)!}\,.
 \]
Then observe that 
 \begin{equation} \label{E:intboundbis}
  \prod_{i=1}^n \frac{(l_i+2)!}{(j_i+2)!}\ =\ 
  \prod_{i=1}^n (l_i{+}2)(l_i{+}1)\dotsb(j_i+3) \ \leq\ 
  \prod_{i=1}^n (l+1)^{l_i-j_i}\ =\ (l+1)^k\,,
 \end{equation}
 as $\sum_i(l_i-j_i)=k$ and we have $l_i+2<l+1$ since
 $l=l_1+\dotsb+l_n$ with each $l_i>0$ and we assumed that $n>1$.

 Now, as $l\geq 5$, we have $l+1<2^{\frac{l+1}{2}-\frac{1}{l}}$ but we also have
\[
  l-\frac{x-1}{2}-\frac{1}{x}\ \geq \ l-\frac{l-1}{2}-\frac{1}{l}\ =\ 
   \frac{l+1}{2}-\frac{1}{l}\,,
\]
for $1\leq x\leq l$.
Thus 
 \begin{equation}\label{eq:powers}
    (l+1)^k\ \leq\ \bigl(2^{\frac{l+1}{2}-\frac{1}{l}}\bigr)^k
    \leq\ \bigl(2^{l-\frac{k-1}{2}-\frac{1}{k}}\bigr)^k\ =\ 2^{k-1+\binom{l}{2}-\binom{l-k}{2}}\,.    
 \end{equation}
Putting~\eqref{E:intboundbis} together with~\eqref{eq:powers}
establishes~\eqref{E:intboundb}, and 
completes the proof of Lemma~\ref{L:further estimation}.

%%%%%%%%%%%%%%%%%%%%%%%%%%%%%%%%%%%%%%%%%%%%%%%%%%%%%%%%%%%%%%%%%%%%%%%%%%%%
%
%
\section*{Acknowledgments}
We thank the Centre Interfaculaire Bernoulli at the EPFL in Lausanne,
Switzerland, where we began this project.

%%%%%%%%%%%%%%%%%%%%%%%%%%%%%%%%%%%%%%%%%%%%%%%%%%%%%%%%%%%%%%%%%%%%%%%%%%%%
\providecommand{\bysame}{\leavevmode\hbox to3em{\hrulefill}\thinspace}
\providecommand{\MR}{\relax\ifhmode\unskip\space\fi MR }
% \MRhref is called by the amsart/book/proc definition of \MR.
\providecommand{\MRhref}[2]{%
  \href{http://www.ams.org/mathscinet-getitem?mr=#1}{#2}
}
\providecommand{\href}[2]{#2}

%%%%%%%%%%%%%%%%%%%%%%%%%%%%%%%%%%%%%%%%%%%%%%%%%%%%%%%%%%%%%%%%%%%%%%%%%%%%
\end{document}